\documentclass[a4paper,12pt]{amsart}
\usepackage{graphicx}
\usepackage[psamsfonts]{amssymb}
\usepackage{amscd,pifont}
\usepackage{cite}
\usepackage[usenames]{color}
\usepackage{amssymb,amsmath,amsthm}
\usepackage[colorlinks,linkcolor=red,anchorcolor=blue,citecolor=blue]{hyperref}

\makeatletter
\@namedef{subjclassname@2020}{\textup{2020} Mathematics Subject Classification}
\makeatother

\newtheorem{theorem}{Theorem}[section]
\newtheorem{lemma}[theorem]{Lemma}
\newtheorem{proposition}[theorem]{Proposition}
\newtheorem{corollary}[theorem]{Corollary}
\newtheorem{definition}[theorem]{Definition}

\def\N{\mathbb{N}}
\def\Z{\mathbb{Z}}
\def\Q{\mathbb{Q}}

\def\Zp{\mathbb{Z}_p}
\def\Qp{\mathbb{Q}_p}

\newcommand{\abs}[1]{|#1|}
\newcommand{\abspp}[1]{|#1|_p}
\newcommand{\norm}[1]{\|#1\|}

\newcommand{\supp}{\text{supp}}

\begin{document}

\title{On  product spectral sets and functional tiles in $\Q_p^d$}

\author{Mamateli Kadir}
\address{School of Mathematics and Statistics \& Research Center of Modern Mathematics and Applications,
Kashi University, Kashi, 844006, P. R. China}

\email{mamatili880@163.com}


\begin{abstract}
This paper studies product spectral sets and functional tiles in the $p$-adic spaces $\mathbb{Q}_p^d$
within the framework of the {\bf product spectral set conjecture}. We establish  a stability result for functional
tiles under weak convergence of tiling complements, characterize product spectral pairs
with product spectra, and fully resolve the conjecture for cylindric sets by proving that
spectrality of a cylindric set $\Omega=B_\gamma(a)\times\Theta$ is equivalent to spectrality of $\Theta$.
\end{abstract}

\subjclass[2020]{Primary 43A25;  Secondary 43A70, 26E30.}

\keywords{$p$-adic space, product spectral set, Fuglede's conjecture, functional tile.}

\maketitle

\section{Introduction}

Let $G$ be a locally compact Abelian group and $\widehat{G}$  be its dual group.
A Borel set  $\Omega\subset G$ of positive and finite Haar measure is called a {\bf spectral set}
if there exists a set $\Lambda \subset \widehat G$ of continuous characters of $G$ which forms
an orthogonal basis of the space $L^2(\Omega)$ of square Haar-integrable functions.
In this case, $\Lambda$ is called a {\bf spectrum} of $\Omega$ and $(\Omega,\Lambda)$ is called
a {\bf spectral pair}.
We say that a set $\Omega$ {\bf tiles} $G$ by translation if there exists a set $T\subset G$
such that the family of sets $\{\Omega+t\}_{t\in T}$ constitutes a partition of $G$ up to measure zero.
Such a set $T$  is called a {\bf tiling complement} or a  {\bf tiling set} of $\Omega$ and
$(\Omega,T)$ is called a {\bf tiling pair}.

The study of spectral sets and tiling properties, originating from Fuglede's seminal work \cite{Fuglede1974},
investigates the profound connection between the geometry of a set and the harmonic analysis it supports.
This connection arises naturally in several areas of mathematics and mathematical physics, including the
theory of commuting partial differential operators, the structure of quasicrystals, and  wavelet constructions.

A central question, known as \textbf{Fuglede's conjecture} or \textbf{spectral set conjecture}, proposes that a Borel set $\Omega\subset\mathbb{R}^d$ of positive and finite Lebesgue measure is
  a \textbf{spectral set} if and only if it is a \textbf{translational tile}.

Significant progress has been made in understanding this conjecture in Euclidean space $\mathbb{R}^d$.
For instance, it is known that the Euclidean balls \cite{Fuglede2001} or any convex domains with a smooth
boundary \cite{Iosevich-Katz-Tao} or non-symmetric convex domains \cite{K2000}, are not spectral. Clearly,
they are not translational tiles either. A complete classification has been achieved for convex domains in $\mathbb{R}^d$,
establishing that a convex domain is spectral if and only if it is a translational tile \cite{Iosevich-Katz-Tao2008, LM21}.
However, the general conjecture was disproved by Tao \cite{Tao2004}, who showed the ``spectral $\Rightarrow$ tile''
direction fails in dimensions $d \geq 5$. Subsequent work provided counterexamples for both directions in dimensions $d \geq 3$ \cite{FMM2006,KM2006,KM2006a,Matolcsi2005}.
The conjecture remains open in dimensions $d =1, 2$.

The \textbf{generalized  Fuglede's conjecture} for a locally compact Abelian group $G$
states that a Borel set $\Omega\subset G$ of positive and finite Haar measure is a \textbf{spectral set}
if and only if it is a \textbf{translational tile}.

This generalized formulation has attracted considerable attention and has been investigated in finite
Abelian groups \cite{AIMP2015, IMP17, KMSV20, KMSV22, KS21, MK17, S19, S20, Somlai21},
as well as in non-Archimedean locally compact fields
\cite{Fan,Fan-Fan-Shi,Fan-Fan-Liao-Shi,Kadir}.

In the $p$-adic field $\Qp$, the conjecture has been completely resolved for one-dimensional sets.
A Borel set $\Omega \subset \mathbb{Q}_p$ is spectral \textit{if and only if} it is a translational
tile \textit{if and only if}  it is an almost compact open set with a specific $p$-homogeneous structure
 \cite{Fan-Fan-Shi,Fan-Fan-Liao-Shi}.
There are some counterexamples in higher-dimensional $p$-adic spaces $\mathbb{Q}_p^{d}$ for $d \geq 3$,
but the conjecture remains open in $\Q_p^2$.

Fuglede's original motivation came from commuting self-adjoint operators in quantum mechanics. Spectral sets
in $p$-adic spaces may find interpretations in $p$-adic quantum models or in the study of vibrational modes on ultrametric lattices.

An important problem connected to the  spectral set conjecture is whether the spectrality (resp. tiling property)
of a product set in a product group implies the spectrality (resp. tiling property) of the factors.
This leads to the following conjecture. Let $ G_1$ and $G_2$ be two locally compact Abelian groups,
and $\Omega_1\subset G_1$ and $ \Omega_2\subset G_2$ be two bounded, measurable sets.

\textbf{Product spectral set conjecture:}  A product set $\Omega=\Omega_1\times\Omega_2$ is a spectral set (resp. a tile) if and only if
$\Omega_1$ and $\Omega_2$ are both spectral sets (resp. tiles).

The tiling part of this conjecture is  relatively simple and has been settled in $\mathbb{R}^d  $\cite{Kolountzakis2016},
and in $\mathbb{Q}_p^d$ \cite{K2019}.
Research has also been focused on the spectral part in the Euclidean setting.
The conjecture has been confirmed when $\Omega_1\subset\mathbb{R}$ is an interval \cite{Greenfeld-Lev2016} or
union of two intervals \cite{Kolountzakis2016}, when $\Omega_1\subset\mathbb{R}^2$ is a convex
polygon \cite{GL2020}, and when $\Omega_1$ is a classical Sierpinski self-affine tile  and $\Omega_2$
is a Lebesgue measurable set of measure one \cite{CLZ}. However, Somlai \cite{Somlai}
recently disproved this conjecture by constructing a counterexample showing that
there exist bounded measurable sets $\Omega_1$ and $\Omega_2$ in $\mathbb{R}^3$ such that
$\Omega_1$ is not spectral, yet $\Omega_1\times\Omega_2$ is spectral in $\mathbb{R}^6$.
Although the spectral set conjecture has been completely resolved in one-dimensional case $\Qp$,
no one has  considered the product spectral set conjecture in $\Qp^d$, and we make the first step in this direction.

We could generalize the  notion of tiles to functions. Let $f\in L^1(\Q_p^d)$ be a non-negative function,
and  $\Lambda\subset \Q_p^d$ be a discrete set. We say that $f$ tiles $\Q_p^d$ by translation
with $\Lambda\subset\Q_p^d$, if we have
$$
\sum_{\lambda\in\Lambda}f(x-\lambda)=1, \quad a.e.\ x\in\Q_p^d.
$$
In this case, we say that  $(f, \Lambda)$ is a tiling pair.
If $f=1_\Omega$ is the indicator function of a Borel set $\Omega\subset\Q_p^d$, then
the functional tiles are just the translational tiles.

The study of translational tiles in $\Qp^d$ contributes to the general theory of tiles in metric measure
spaces, with potential links to questions about covering, packing, and discrepancy in ultra-metric geometries.

In this paper, we focus on the spectral part of the {\bf product spectral set conjecture}  and functional tiles
in the  non-Archimedean locally compact space $\mathbb{Q}_p^d$. Our main contributions are threefold:\\
\indent 1. {\bf Stability of functional tiles under weak convergence}.

A set \(\Lambda \subset \mathbb{Q}_p^d\) is called \textbf{uniformly discrete} if there exists a \(\delta >0\)
such that for any two distinct points \(\lambda, \lambda' \in \Lambda\), we have
\[
|\lambda - \lambda'|_p \ge \delta.
\]
The largest such \(\delta\) is called the \textbf{separation constant}, and is denoted by \(\delta(\Lambda)\).

Let $\Lambda_n$ be a sequence of uniformly discrete sets in $\Qp^d$, with separation constants
$\delta(\Lambda_n)\ge \delta$. The sequence $\Lambda_n$ is said to converge weakly to a set $\Lambda$
if for every $\eta\in\N$ and every $\gamma\in\Z$ there exists an $N$ such that
$$
\Lambda_n\cap B_{\gamma}^d\subset \Lambda+ B_{-\eta}^d \quad and \quad \Lambda\cap B_{\gamma}^d\subset \Lambda_n+ B_{-\eta}^d
$$
for all $n\geq N$, where $B_{\gamma}^d$ denotes the open ball of radius $p^{\gamma}$ centered at the origin.

In this case the weak limit $\Lambda$ is also uniformly discrete with separation
constant $\delta(\Lambda)\ge \delta$.
The following theorem confirms that the tiling property of a function is invariant under weak convergence.
\begin{theorem}\label{weak convergence}
Let $\Lambda_n$ be a sequence of uniformly discrete sets  with separation constants
$\delta(\Lambda_n)\ge \delta$, weakly convergent to $\Lambda$. If $f\in L^1(\Q_p^d)$ is non-negative
and $(f, \Lambda_n)$ is a tiling pair for all $n\in\N$, then $(f, \Lambda)$ is also a tiling pair.
\end{theorem}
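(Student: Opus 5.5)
The plan is to test the tiling identity against compactly supported test functions and pass to the limit $n\to\infty$. Since $f\ge 0$ and every set has separation constant at least $\delta$, the periodization $F_\Lambda(x)=\sum_{\lambda\in\Lambda}f(x-\lambda)$ is locally integrable. Indeed, a ball $B_\gamma^d$ contains at most $C(\gamma,\delta)$ points of any $\delta$-separated set, where the bound comes from counting disjoint balls of radius $\delta$ inside it. So it suffices to show that $\int_{B}F_\Lambda\,dx=\mu(B)$ for every ball $B=a+B_\gamma^d$. Balls of this kind generate the Borel $\sigma$-algebra, and both sides are locally finite measures agreeing on every ball, so $F_\Lambda=1$ a.e. will follow.

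Fix such a ball $B$. First I reduce to a truncated problem. For $R\in\Z$, split $f=f1_{B_R^d}+f1_{\Q_p^d\setminus B_R^d}$. The tail contribution is
\[
\sum_{\lambda}\int_B f1_{(B_R^d)^c}(x-\lambda)\,dx=\int_{(B_R^d)^c} f(y)\,\#\{\lambda:\ y+\lambda\in B\}\,dy .
\]
Since $\#\{\lambda\in\Lambda_n\cap(B-y)\}\le C(\gamma,\delta)$ uniformly in $n$ and $y$, this is at most $C\int_{|y|_p\ge p^R}f$, which is small uniformly in $n$ and also for $\Lambda$. The same bound holds for the truncated part. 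Hence it suffices to show that
\[
\sum_{\lambda\in\Lambda_n}\int_B g(x-\lambda)\,dx\ \to\ \sum_{\lambda\in\Lambda}\int_B g(x-\lambda)\,dx,\qquad g=f1_{B_R^d}.
\]
Only $\lambda$ in a fixed large ball $B_{\gamma'}^d$ contribute, where $\gamma'=\max(\gamma,R,\log_p|a|_p)+1$. In that ball there are finitely many points of $\Lambda$, say $\lambda_1,\dots,\lambda_k$.

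Next I use weak convergence. Choose $\eta$ with $p^{-\eta}<\delta$. For large $n$, each point of $\Lambda_n\cap B_{\gamma'}^d$ lies within $p^{-\eta}$ of some point of $\Lambda$, and conversely. Separation makes this matching a bijection for fine scales: two points of $\Lambda_n$ within $p^{-\eta}$ of the same $\lambda_j$ would be closer than $\delta$ to each other by the ultrametric inequality, and similarly in the other direction. A small care is needed at the boundary of $B_{\gamma'}^d$, but this is handled by enlarging $\gamma'$ by one, since the ultrametric property preserves ball membership under perturbations smaller than the radius. So for large $n$ we can write $\Lambda_n\cap B_{\gamma'}^d=\{\lambda_1^{(n)},\dots,\lambda_k^{(n)}\}$ with $\lambda_j^{(n)}\to\lambda_j$.

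Finally, continuity of translation in $L^1$ gives
\[
\int_B g(x-\lambda_j^{(n)})\,dx\to\int_B g(x-\lambda_j)\,dx .
\]
In fact, once $|\lambda_j^{(n)}-\lambda_j|_p\le p^{\gamma}$ the translate of $B$ is the same ball and the integrals are exactly equal. Summing over the finite index $j$ and letting $R\to\infty$ yields $\int_B F_\Lambda=\mu(B)$. The main obstacle is the uniform control of the tail and the bijective matching near the boundary of the truncation ball. Both are handled by the uniform separation bound and the ultrametric ball structure.
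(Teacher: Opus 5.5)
Your proof is correct and follows essentially the same route as the paper. Your Tonelli rewriting $\sum_{\lambda\in\Lambda_n}\int_B f(x-\lambda)\,dx=\int f(y)\,\#\{\lambda\in\Lambda_n: y+\lambda\in B\}\,dy$ is the paper's $\int f\Psi_n$ with $\psi=1_B$, and your uniform counting bound and bijective ultrametric matching are its uniform bound on $\Psi_n$ and eventually exact convergence $\Psi_n\to\Psi$. The only real differences are bookkeeping: you truncate $f$ and control the tail by hand where the paper invokes dominated convergence, and you test directly against ball indicators rather than general Bruhat--Schwartz functions.
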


This theorem provides a topological tool for studying deformations and
approximations of tiling structures.

\indent 2. {\bf Product spectrality with product spectra}.

Our first main result on the product spectral set conjecture gives a positive answer when spectra have product forms.

\begin{theorem}\label{product}
Let $\Omega_1\subset\Q_p^{d_1}$, $\Omega_2\subset\Q_p^{d_2}$ be bounded  measurable sets of positive measure,
and let $\Lambda_1\subset\Q_p^{d_1}$, $\Lambda_2\subset\Q_p^{d_2}$ be  discrete sets.
Denote $\Lambda=\Lambda_1\times\Lambda_2$ and $\Omega=\Omega_1\times\Omega_2$.
Then $(\Omega,\Lambda)$ is a spectral pair in $\Q_p^{d_1+d_2}$ if and only if $(\Omega_1,\Lambda_1)$
is a spectral pair in $\Q_p^{d_1}$, and $(\Omega_2,\Lambda_2)$ is a spectral pair in $\Q_p^{d_2}$.
\end{theorem}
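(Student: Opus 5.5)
We argue through the standard reformulation of spectrality by Fourier transforms. Let $\chi$ be the canonical additive character of $\Q_p$, and write $e_\lambda(x)=\chi(\lambda\cdot x)$. Then $(\Omega,\Lambda)$ is a spectral pair if and only if two conditions hold:
\begin{enumerate}
\item[(a)] orthogonality: $\widehat{1_\Omega}(\lambda-\lambda')=0$ for all distinct $\lambda,\lambda'\in\Lambda$;
\item[(b)] completeness: for every $\xi\in\Q_p^d$ we have $\sum_{\lambda\in\Lambda}|\widehat{1_\Omega}(\xi-\lambda)|^2=|\Omega|^2$.
\end{enumerate}
Equivalently, $|\widehat{1_\Omega}|^2/|\Omega|^2$ tiles $\Q_p^d$ by $\Lambda$ as a functional tile. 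This criterion follows from Parseval: $\sum_\lambda |\langle e_\xi,e_\lambda\rangle|^2=\|e_\xi\|^2$ holds for all $\xi$ if and only if an orthogonal family of exponentials is complete, since the $e_\xi$ span a dense subspace of $L^2(\Omega)$. We will state and use this criterion. The key structural fact is the factorization
\[
\widehat{1_{\Omega_1\times\Omega_2}}(\xi_1,\xi_2)=\widehat{1_{\Omega_1}}(\xi_1)\,\widehat{1_{\Omega_2}}(\xi_2),
\]
which holds because $\chi$ is additive and the Haar measure on $\Q_p^{d_1+d_2}$ is the product measure.

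\emph{($\Leftarrow$).} Assume both $(\Omega_i,\Lambda_i)$ are spectral pairs. For orthogonality, take distinct $(\lambda_1,\lambda_2),(\lambda_1',\lambda_2')\in\Lambda$. Some coordinate differs, say $\lambda_i\neq\lambda_i'$, so the corresponding factor $\widehat{1_{\Omega_i}}(\lambda_i-\lambda_i')$ vanishes, and hence so does the product. For completeness, the double sum $\sum_{\lambda_1,\lambda_2}|\widehat{1_{\Omega_1}}(\xi_1-\lambda_1)|^2|\widehat{1_{\Omega_2}}(\xi_2-\lambda_2)|^2$ splits as the product of the two one-factor sums. These equal $|\Omega_1|^2$ and $|\Omega_2|^2$ respectively, so the total is $|\Omega|^2$.

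\emph{($\Rightarrow$).} This is the direction requiring care.

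\emph{Orthogonality of each factor.} Fix $\lambda_2\in\Lambda_2$ and distinct $\lambda_1,\lambda_1'\in\Lambda_1$. Orthogonality in $\Lambda$ gives $\widehat{1_{\Omega_1}}(\lambda_1-\lambda_1')\,\widehat{1_{\Omega_2}}(0)=0$. Since $\widehat{1_{\Omega_2}}(0)=|\Omega_2|>0$, the first factor must vanish, so $\Lambda_1$ is orthogonal for $\Omega_1$. The same argument applies to $\Lambda_2$.

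\emph{Completeness of each factor.} Define
\[
S_i(\xi_i)=\sum_{\lambda_i\in\Lambda_i}|\widehat{1_{\Omega_i}}(\xi_i-\lambda_i)|^2 .
\]
Completeness of $(\Omega,\Lambda)$ gives $S_1(\xi_1)S_2(\xi_2)=|\Omega_1|^2|\Omega_2|^2$ for all $\xi_1,\xi_2$. Orthogonality gives Bessel's inequality, so $S_i(\xi_i)\le|\Omega_i|^2$ for all $\xi_i$. Each $S_i$ is also nonnegative. If $S_1(\xi_1)<|\Omega_1|^2$ for some $\xi_1$, then $S_2(\xi_2)>|\Omega_2|^2$ for every $\xi_2$, contradicting Bessel. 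Hence $S_1\equiv|\Omega_1|^2$, and likewise $S_2\equiv|\Omega_2|^2$. This shows both $(\Omega_i,\Lambda_i)$ are spectral pairs.

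The only genuinely delicate point is justifying the criterion (a)+(b) in the $p$-adic setting for bounded $\Omega$ and discrete $\Lambda$. This needs two facts: $\langle e_\xi, e_\lambda\rangle_{L^2(\Omega)}=\widehat{1_\Omega}(\lambda-\xi)$ up to conjugation and sign conventions, and the exponentials $\{e_\xi\}_{\xi\in\Q_p^d}$ are total in $L^2(\Omega)$. The latter follows from injectivity of the Fourier transform on $L^1$. Once the criterion is in hand, the rest is the elementary factorization argument above.
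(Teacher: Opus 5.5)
Your proof is correct, but it takes a different route from the paper's.

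\textbf{The paper's route.} The paper argues entirely in $L^2$.
\begin{itemize}
\item Orthogonality comes from Fubini: inner products of exponentials on $\Omega_1\times\Omega_2$ factor.
\item Completeness in ($\Leftarrow$) is proved by slicing a function $f\perp\mathcal{E}(\Lambda)$. One applies completeness of $\Lambda_2$ in $y$, then completeness of $\Lambda_1$ in $x$.
\item Completeness in ($\Rightarrow$) is proved by lifting $\varphi\in L^2(\Omega_2)$ to $\Phi(x,y)=\varphi(y)$. Its inner products factor as $C_{\lambda_1}\langle\chi_{\lambda_2},\varphi\rangle_{\Omega_2}$, so $\Phi=0$ and hence $\varphi=0$.
\end{itemize}

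\textbf{Your route.} You pass to the Fourier side. You use the Parseval criterion, which is the paper's Lemma~\ref{criterion}, so you may simply cite it rather than re-derive it. Combined with the factorization $\widehat{1}_{\Omega}(\xi_1,\xi_2)=\widehat{1}_{\Omega_1}(\xi_1)\widehat{1}_{\Omega_2}(\xi_2)$, both directions become identities between nonnegative series. The delicate step, completeness in ($\Rightarrow$), reduces to the scalar squeeze $S_1S_2=|\Omega_1|^2|\Omega_2|^2$ with $S_i\le|\Omega_i|^2$.

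\textbf{What your route buys.}
\begin{itemize}
\item It avoids the measure-theoretic bookkeeping that the paper's slicing argument leaves implicit. That argument needs $f(\cdot,y)\in L^2(\Omega_1)$ for a.e.\ $y$. It also needs the exceptional null sets for the various $\lambda_1$ to be combined, which uses countability of $\Lambda_1$.
\item It shows exactly where orthogonality enters. The product identity alone only forces $S_1\equiv c$ and $S_2\equiv|\Omega|^2/c$ for some constant $c>0$. It is the Bessel bound, available because you first proved orthogonality of each $\Lambda_i$, that pins down $c=|\Omega_1|^2$.
\end{itemize}

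\textbf{What the paper's route buys.} It needs neither the Fourier criterion nor injectivity of the Fourier transform, only Fubini. Its ($\Rightarrow$) completeness step via the lifted function $\Phi$ is essentially one line.
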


This result extends earlier Euclidean results \cite{Jorgensen-Pedersen1999} to the $p$-adic
setting and clarifies the structural role of product spectra.

\indent 3. {\bf Spectrality of the cylindric sets}.

We then focus on a specific class of sets in $\Q_p^d$. A Borel set
 $\Omega\subset\Q_p^d \ (d\geq2)$  of positive and finite Haar measure is called
a \textbf{cylindric} set, if it has the form
 \begin{equation}
 \Omega=B_\gamma(a)\times\Theta,
\end{equation}
where $B_\gamma(a)\subset\Q_p$ is a ball centered at $a$ of radius $p^\gamma$,
and $\Theta$ is a Borel set in $\Q_p^{d-1}$. It is clear that balls $B_\gamma(a)$ in
$\Q_p$ are spectral sets.
For such cylindric sets, we resolve the product spectral set conjecture completely.
\begin{theorem}\label{main}
Let $\Omega=B_\gamma(a)\times\Theta$ be a cylindric set in $\Qp^d$. Then $\Omega$ is a {\em spectral set} in $\Qp^d$
if and only if $\Theta$ is a {\em spectral set} in $\Qp^{d-1}$.
\end{theorem}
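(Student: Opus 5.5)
The direction ``$\Theta$ spectral $\Rightarrow$ $\Omega$ spectral'' follows from Theorem~\ref{product}. The ball $B_\gamma(a)\subset\Q_p$ is spectral; explicitly, a spectrum is a complete set of representatives of $B_{-\gamma}/B_{-\gamma}\cap\Z[1/p]$-type cosets, i.e. $\Lambda_1=\{\lambda\in \Q_p : \{\lambda\}_p\ \text{runs over}\ p^{\gamma}\text{-adic fractions}\}$ suitably scaled. If $\Lambda_2$ is a spectrum of $\Theta$, then $\Lambda_1\times\Lambda_2$ is a spectrum of $\Omega$ by Theorem~\ref{product}. Translating by $a$ does not affect spectrality, so we may take $a=0$; by scaling the first coordinate by $p^{\gamma}$ (a measure-preserving-up-to-constant linear automorphism, which maps spectra to spectra via the adjoint map), we may even take $B_\gamma(a)=\Z_p$.

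For the converse, assume $(\Omega,\Lambda)$ is a spectral pair with $\Omega=\Z_p\times\Theta$. The key identity is the factorization of the Fourier transform of the indicator:
\[
\widehat{1_\Omega}(\xi_1,\xi')=\widehat{1_{\Z_p}}(\xi_1)\,\widehat{1_\Theta}(\xi')=1_{\Z_p}(\xi_1)\,\widehat{1_\Theta}(\xi').
\]
Orthogonality of $\Lambda$ means that $\widehat{1_\Omega}(\lambda-\lambda')=0$ for distinct $\lambda,\lambda'\in\Lambda$. Completeness is equivalent to the tiling identity $\sum_{\lambda\in\Lambda}|\widehat{1_\Omega}(\xi-\lambda)|^2=|\Omega|^2$ for all $\xi$, i.e. $|\widehat{1_\Omega}|^2/|\Omega|^2$ tiles by $\Lambda$ in the functional sense introduced above. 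Writing $\lambda=(\lambda_1,\lambda')$, the tiling identity becomes
\[
\sum_{\lambda\in\Lambda}1_{\Z_p}(\xi_1-\lambda_1)\,|\widehat{1_\Theta}(\xi'-\lambda')|^2=|\Z_p|^2|\Theta|^2 .
\]
Fix $\xi_1$, and let $\Lambda'_{\xi_1}=\{\lambda' : (\lambda_1,\lambda')\in\Lambda,\ \lambda_1\in\xi_1+\Z_p\}$. Since the indicator depends only on the coset $\lambda_1+\Z_p$, this gives
\[
\sum_{\lambda'\in\Lambda'_{\xi_1}}|\widehat{1_\Theta}(\xi'-\lambda')|^2=|\Theta|^2\quad\text{for all }\xi',
\]
with multiplicities counted. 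It remains to check that the projection map $\lambda\mapsto\lambda'$ is injective on this slice, so that $\Lambda'_{\xi_1}$ is an honest set. Take two distinct points of $\Lambda$ whose first coordinates lie in the same $\Z_p$-coset. Their difference has first coordinate in $\Z_p$, where $\widehat{1_{\Z_p}}=1$, so orthogonality forces $\widehat{1_\Theta}(\lambda'-\mu')=0$. Because $\widehat{1_\Theta}(0)=|\Theta|\neq0$, this gives $\lambda'\neq\mu'$. The same computation shows that $\Lambda'_{\xi_1}$ is an orthogonal family for $\Theta$. Together with the tiling identity, this makes $\Lambda'_{\xi_1}$ a spectrum of $\Theta$, by the standard criterion: exponentials $\{\chi_{\lambda'}\}$ form an orthogonal basis of $L^2(\Theta)$ iff $\sum|\widehat{1_\Theta}(\xi'-\lambda')|^2=|\Theta|^2$ everywhere. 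Choosing $\xi_1$ so that the slice is nonempty (which the tiling identity forces for every $\xi_1$) finishes the proof.

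The main obstacle is making the spectral/tiling criterion rigorous in $\Q_p^d$ for merely Borel $\Theta$. One needs the identity $\sum_\lambda|\langle f,\chi_\lambda\rangle|^2=\|f\|^2$ applied to $f=\chi_\xi 1_\Omega$ to characterize completeness, and this needs $\Lambda$ countable and discrete (true since $\Omega$ has positive measure, giving uniform discreteness via orthogonality and the continuity of $\widehat{1_\Omega}$ near $0$). I expect this part to be routine Parseval bookkeeping. The combinatorial slicing step above is the real content.
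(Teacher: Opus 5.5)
Your proof is correct. The ``if'' direction is the paper's, but for ``only if'' you take a genuinely different route.

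The paper argues on the Hilbert-space side. Lemma~\ref{lemintegerspctrum} first replaces each first coordinate $\lambda_1$ by its representative $\ell_\lambda\in\mathbb{L}$; this is harmless because $\chi_z\equiv1$ on $\Zp$ for $z\in\Zp$, so the exponentials restricted to $\Omega$ do not change. Lemma~\ref{lembasespectrum} then uses the decomposition $L^2(\Zp\times\Theta)=\bigoplus_{\ell\in\mathbb{L}}\chi_\ell\cdot L^2(\Theta)$ and tests completeness against functions $\chi_\ell(x)g(y)$, showing that each fiber $\Gamma_\ell$ is a spectrum of $\Theta$.

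You argue on the Fourier side. By Lemma~\ref{ballspectrum}, $|\widehat{1}_\Omega|^2=1_{\Zp}\otimes|\widehat{1}_\Theta|^2$, so the identity of Lemma~\ref{criterion} (equivalently, the functional tiling of Lemma~\ref{spectral}) splits into one identity per coset of $\Zp$. Orthogonality then gives injectivity of the projection on each slice. Your slice over the coset $\ell+\Zp$ is exactly the paper's fiber $\Gamma_\ell$ of the normalized spectrum, so both arguments produce the same spectra of $\Theta$.

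Your version is shorter and needs no normalization step. It also makes transparent that every slice is automatically a nonempty spectrum. The price is that it rests entirely on Lemma~\ref{criterion}, used in both directions and in both $\Qp^d$ and $\Qp^{d-1}$. The paper's argument needs only the orthogonality relations, the basis $\{\chi_\ell\}$ of $L^2(\Zp)$ from Lemma~\ref{lemlattice}, and the definition of completeness.

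The ``main obstacle'' you flag is precisely Lemma~\ref{criterion}, which is stated for arbitrary Borel sets of positive finite measure, so nothing further is needed there.

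One small slip in the ``if'' direction: a spectrum of $B_\gamma$ is a complete set of coset representatives of $B_{-\gamma}$ in $\Qp$, e.g.\ $p^{\gamma}\mathbb{L}$. Your description of it is garbled, but it plays no role once you reduce to $\Zp$ and invoke Lemma~\ref{lemlattice}.
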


This result reduces the spectrality of a cylindric set in $\mathbb{Q}_{p}^{d}$ to the spectrality of a set
in $\Qp^{d-1}$, providing a powerful tool for constructing lower dimensional spectral sets from
known higher dimensional ones, and vice versa.

The`` if " direction follows directly from Theorem \ref{product},
but the ``only if"  direction requires a careful analysis of the Fourier transform of $p$-adic
balls and a construction of spectra constrained to a quasi-lattice structure. This result reduces
the spectrality question in dimension $d$ to one in dimension $d-1$, offering a new method for
constructing spectral sets inductively.

Our approach combines tools from $p$-adic harmonic analysis, weak convergence methods, and geometric
properties of ultrametric spaces. The results contribute not only to the Fuglede's conjecture in
non-Archimedean contexts, but also to the broader study of spectral synthesis, tiling theory,
and basis properties of exponential systems in locally compact Abelian groups. They may also have
implications for the construction of orthogonal bases in $p$-adic wavelet theory and for understanding
tiling and spectral properties in adelic or product-of-local-fields settings.

The paper is organized as follows. In Section \ref{prelim}, we recall essential preliminaries on $p$-adic analysis,
Fourier transforms, and the concepts of tiling and spectrality.
Section \ref{main results} is devoted to the proofs of our main theorems.

\section{Preliminaries}\label{prelim}
This section presents essential foundations for proving the paper's main theorems,
including the algebraic-topological properties of the $p$-adic field $\Qp$,
$p$-adic integration theory, Fourier transform of functions in $L^1(\Qp^d)$,
and key criteria for spectral sets and tiling sets.

\subsection{The field $\Qp$ of $p$-adic numbers}

We start with a brief review  of $p$-adic numbers. Let \(\mathbb{Q}\) denote the field of rational numbers,
and let \(p \geq 2\) be a prime number. Any non-zero element \(x \in \mathbb{Q}\) can be written as \(x = p^v \cdot \frac{a}{b}\),
where \(v, a, b \in \mathbb{Z}\), and \(\gcd(p, a) = \gcd(p, b) = 1\).
By the unique factorization property of integers, the exponent \(v\) is uniquely determined
by \(x\). We define the \(p\) adic valuation \(v_p(x) = v\) for \(x \neq 0\) and
\(v_p(0) = +\infty\), along with the \(p\)-adic absolute value \(|x|_p = p^{-v_p(x)}\).
This absolute value \(|\cdot|_p \) is non-Archimedean, meaning it satisfies three properties:\\

\indent (i)  \  $|x|_p\ge 0$ with equality only for $x=0$; \\
\indent (ii) \ $|x y|_p=|x|_p |y|_p$;\\
\indent (iii) $|x+y|_p\le \max\{ |x|_p, |y|_p\}$.\\

The field \(\Qp\) of \(p\)-adic numbers is the completion of \(\mathbb{Q}\) with respect to the
\(p\)-adic absolute value \(|\cdot|_p\). Every element \(x \in \Qp\) admits a unique expansion as
\[
x = \sum_{n=v}^{\infty} a_n p^n \quad \left(v \in \mathbb{Z}, \, a_n \in \{0, 1, \dots, p-1\}, \, \text{and } a_v \neq 0\right),
\]
where \(v(x) := v\) is the \(p\)-adic valuation of \(x\), and \(|x|_p= p^{-v}\).
The ring of \(p\)-adic integers, denoted by \(\Zp\), is the unit ball centered at the origin
\begin{align}\label{element}
\Zp = \Big\{x \in \Qp : x = \sum_{n=0}^{\infty} a_n p^n\Big\}.
\end{align}
The fractional part of \(x \in \Qp\) is defined as
\[
\{x\} = \sum_{n=v}^{-1} a_n p^n
\] when \(v<0\), and  \(\{x\}=0\) otherwise.

Since the \(p\)-adic absolute value only takes values in \(\{p^\gamma : \gamma \in \mathbb{Z}\} \cup \{0\}\), we only need to consider balls of radius \(r = p^\gamma\). For \(a \in \Qp\), we denote
the closed ball of radius \(p^\gamma\) centered at \(a\) by\[B_\gamma(a) = \big\{x \in \Qp : \abspp{x-a} \leq p^\gamma\big\},\]
and sphere of radius \(p^\gamma\) centered at \(a\) by \[S_\gamma(a) = \big\{x \in \Qp : \abspp{x-a} = p^\gamma\big\}.\]

We abbreviate \(B_\gamma := B_\gamma(0)\) and \(S_\gamma := S_\gamma(0)\) for simplicity.
As \(\Qp\) is an ultrametric space, \(B_\gamma\) forms an additive subgroup of \(\Qp\).
The set \(B_\gamma(a)=a+B_\gamma\) is a coset of the subgroup \(B_\gamma\).
It is itself a subgroup only when \(a \in B_\gamma\). Additionally, we have
\[
S_\gamma(a) = B_\gamma(a) \setminus B_{\gamma-1}(a), \quad \bigcup_{\gamma \in \mathbb{Z}} B_\gamma(a) = \bigcup_{\gamma \in \mathbb{Z}} S_\gamma(a) = \Qp, \quad \bigcap_{\gamma \in \mathbb{Z}} B_\gamma(a) = \{a\}.
\]

Notation:
\begin{itemize}
    \item \(B_0 = \Zp\) (the ring of \(p\)-adic integers);
    \item \(S_0 = \Zp^\times\) (the multiplicative group of invertible elements in \(\Zp\));
    \item \(B_{-1} = p\Zp\) (the unique maximal ideal of \(\Zp\)).
\end{itemize}

By the ultrametric property, every point in \(B_\gamma(a)\) can serve as its center, and both \(B_\gamma(a)\) and \(S_\gamma(a)\) are clopen sets in \(\Qp\). For any two balls in \(\Qp\), their intersection is non-empty if and only if one is contained in the other, i.e.
\[
B_\gamma(a) \cap B_{\gamma'}(b) \neq \emptyset \iff B_\gamma(a) \subset B_{\gamma'}(b) \text{ or } B_{\gamma'}(b) \subset B_\gamma(a).
\]
In other words, two balls in \(\Qp\) are either disjoint or nested.

When we consider the geometric structure of $\mathbb{Q}_p$ and its subsets, we always keep in our minds two models of $\mathbb{Q}_p$,
the ball model and the tree model . See Figure \ref{Fig:ball model} and Figure \ref{Fig:tree model} for ball model and tree model of the  geometric structure of $\mathbb{Q}_3$, respectively.

\begin{figure}[h!]
	\centering
	\includegraphics[width=0.4\textwidth]{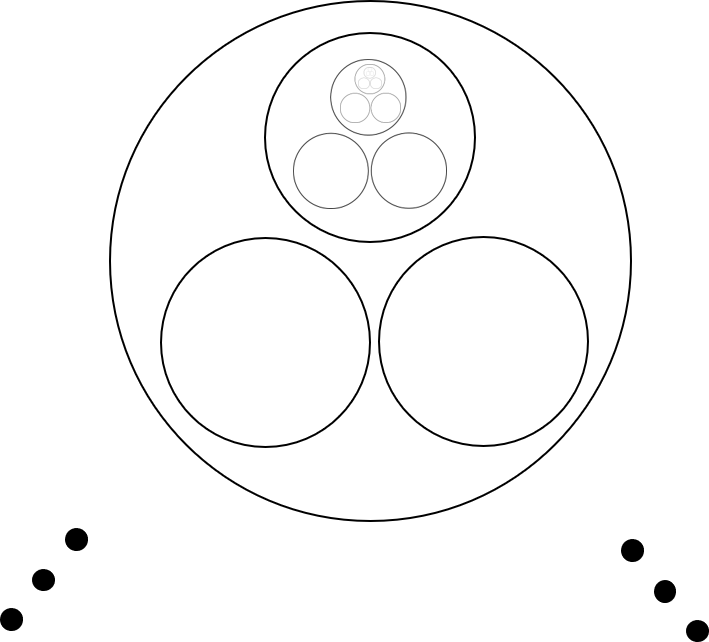}\\
	\caption{Ball model of $\Q_3$.}
	\label{Fig:ball model}
\end{figure}

\begin{figure}[h!]
	\centering
	\includegraphics[width=0.8\textwidth]{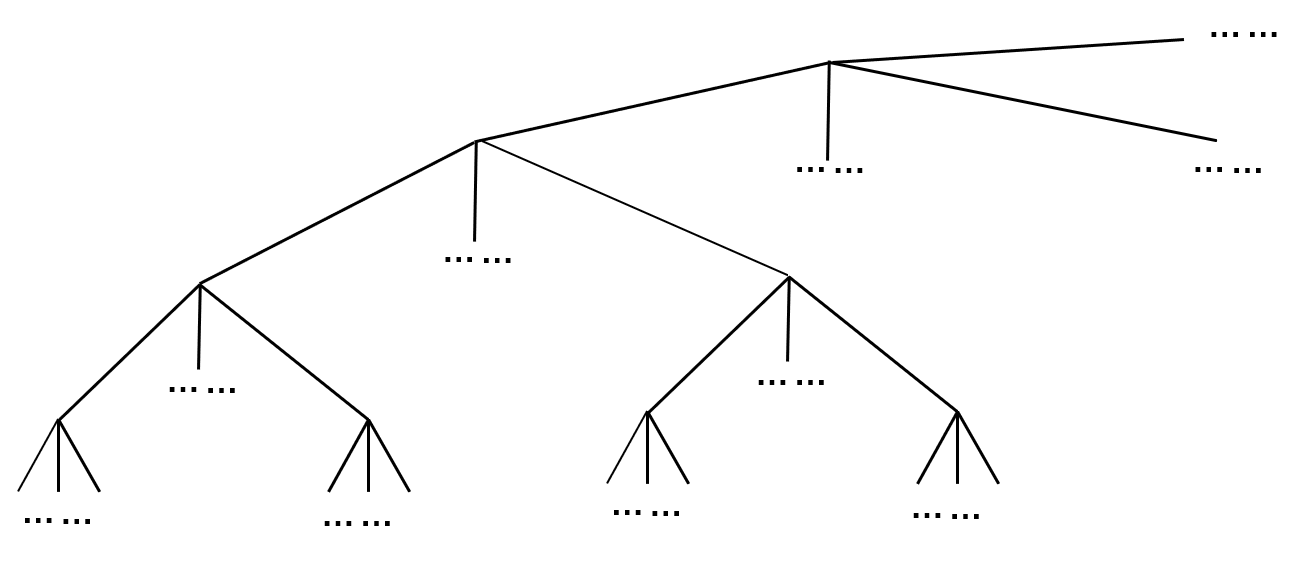}\\
	\caption{Tree model of $\Q_3$.}
	\label{Fig:tree model}
\end{figure}

\subsection{ Some $p$-adic integration theory}

With respect to addition, \(\Qp\) is a locally compact Abelian group, so it admits an additive Haar measure \(dx\),
a positive measure invariant under translation, i.e., \(d(x+a) = dx\) for all \(a \in \Qp\).
If we normalize this Haar measure by setting \(\int_{\Zp} dx = 1\), then \(dx\) is uniquely determined.

There is a key relationship between the additive and multiplicative structures of \(\Qp\), given by the following proposition.

\begin{proposition}[\cite{AKS}, Proposition 3.2.1]
For \( a \in \mathbb{Q}_p^*= \Qp \setminus \{0\}\), we have
$$d(xa) = |a|_p  dx.$$
\end{proposition}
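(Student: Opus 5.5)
The plan is to use the uniqueness of Haar measure up to a positive constant, and then to compute the constant by evaluating both sides on $\Zp$. Fix $a\in\Qp^*$. Define a new Borel measure $\mu_a$ on $\Qp$ by $\mu_a(E)=\mu(aE)$, where $\mu$ is the normalized Haar measure. The statement $d(xa)=|a|_p\,dx$ then means precisely that $\mu_a=|a|_p\,\mu$. Equivalently, it means that $\int_{\Qp} f(x/a)\,dx = |a|_p\int_{\Qp} f(x)\,dx$ for every $f\in L^1(\Qp)$, which is the same change of variables read in the other direction.

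First I will check that $\mu_a$ is a Haar measure. Multiplication by $a$ is a homeomorphism of $\Qp$, with inverse multiplication by $a^{-1}$. Hence $\mu_a$ is a positive Radon measure, and it is finite and positive on compact open sets. It is translation invariant because $a(E+t)=aE+at$, so
\[
\mu_a(E+t)=\mu(aE+at)=\mu(aE)=\mu_a(E).
\]
By uniqueness of Haar measure there is a constant $c(a)>0$ with $\mu_a=c(a)\mu$. It remains to show $c(a)=|a|_p$. Since $\mu(\Zp)=1$, we have $c(a)=\mu(a\Zp)$.

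Next I compute $\mu(a\Zp)$. Write $a=p^{v}u$ with $v=v_p(a)$ and $u\in\Zp^\times$. Since $u\Zp=\Zp$, we get $a\Zp=p^{v}\Zp=B_{-v}$. So it suffices to show $\mu(B_{-v})=p^{-v}$ for every $v\in\Z$.
\begin{itemize}
\item For $v\ge 0$: the subgroup $p^v\Zp$ has index $p^v$ in $\Zp$, with coset representatives the integers $0,1,\dots,p^v-1$ coming from truncating the expansion \eqref{element}. So $\Zp$ is a disjoint union of $p^v$ translates of $p^v\Zp$. Translation invariance then gives $1=p^v\mu(p^v\Zp)$.
\item For $v<0$: in the same way, $p^{v}\Zp$ is a disjoint union of $p^{-v}$ translates of $\Zp$, giving $\mu(p^v\Zp)=p^{-v}$.
\end{itemize}
In both cases $c(a)=p^{-v}=|a|_p$.

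Finally, I will pass from sets to integrals. The identity $\int 1_E(x/a)\,dx=\mu(aE)=|a|_p\mu(E)$ holds for indicator functions. It extends by linearity to simple functions, then by monotone convergence to non-negative measurable functions, and then to $L^1$ by splitting into positive and negative parts.

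The only step requiring real care is the index computation $[\Zp:p^v\Zp]=p^v$ with explicit coset representatives. Once that is in place, everything else is the formal uniqueness argument.
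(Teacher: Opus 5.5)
Your proof is correct. The paper gives no argument of its own and simply quotes the result from \cite{AKS}, and your route is the standard proof behind that citation: $E\mapsto\mu(aE)$ is a translation-invariant Radon measure, hence equals $c(a)\mu$ by uniqueness of Haar measure, and the coset count gives $c(a)=\mu(a\Zp)=\mu(p^{v}\Zp)=p^{-v}=|a|_p$.
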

Let \(\Qp^d\) denote the \(d\)-dimensional vector space over \(\Qp\). We equip \(\Qp^d\) with the maximum norm
\[
|{x}|_p = \max_{1 \leq j \leq d} \abspp{x_j} \quad \text{for } x = (x_1, \dots, x_d) \in \Qp^d.
\]
For \(x = (x_1, \dots, x_d)\) and \(y = (y_1, \dots, y_d)\) in \(\Qp^d\), the scalar product is defined as
\[
x \cdot y := x_1 y_1 + \dots + x_d y_d.
\]

The Haar measure on \(\Qp^d\) is the product measure \(dx_1 \cdots dx_d\), also denoted by \(dx\),
which is translation-invariant. For any linear isomorphism \(M: \Qp^d \to \Qp^d\) with \(\det M \neq 0\),
the measure satisfies
\[
 d(Mx) = \abspp{\det M} dx.
\]

For a measurable subset \(A \subset \Qp^d\) and \(\rho \geq 1\), let \(L^\rho(A)\) denote the space of
all measurable functions \(f: A \to \mathbb{C}\) such that
\[
\int_A \abs{f(x)}^\rho dx < \infty.
\]
The norm of \(f \in L^\rho(A)\) is defined as
\[
\norm{f}_{L^\rho} = \left(\int_A \abs{f(x)}^\rho dx\right)^{1/\rho}.
\]
In particular, when \(\rho = 2\), the space $L^2(A)$ is the Hilbert space of square Haar-integrable functions on $A$,

$$
L^2(A) =\Big\{f: A\rightarrow\mathbb{C}: \int_{A}|f(x)|^2dx<\infty \Big\},
$$
with the inner product
$$
\langle f, g\rangle_{A}=\int_{A} f(x)\overline{g(x)}dx, \quad \forall f, g \in L^2(A).
$$

\begin{lemma}[{\cite{AKS}, Proposition 3.2.2}]\label{dense}
 Let \(O \subset \Qp^d\) be an open set, and let \(C_c(O)\) denote the space of continuous functions on \(O\) with compact support. Then \(C_c(O)\) is dense in \(L^\rho(O)\) for all \(1 \leq \rho < \infty\).
\end{lemma}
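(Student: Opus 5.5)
The plan is to reduce to indicator functions and then use the ultrametric structure: balls in $\Qp^d$ are compact and open at the same time, so their indicator functions are already continuous. This avoids Urysohn-type constructions entirely. Throughout, fix $1\le\rho<\infty$ and an open set $O\subset\Qp^d$; we regard $L^\rho(O)$ as the functions in $L^\rho(\Qp^d)$ vanishing off $O$.

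\emph{Step 1 (reduction to simple functions).} By the standard measure-theoretic argument, finite linear combinations $\sum_{k} c_k 1_{A_k}$ are dense in $L^\rho(O)$. Here each $A_k\subset O$ is measurable with finite Haar measure. Indeed, for $f\ge 0$ one takes increasing simple functions $s_n\uparrow f$; then $|f-s_n|^\rho\le |f|^\rho$, and dominated convergence gives $\|f-s_n\|_{L^\rho}\to 0$. Each level set of such an $s_n$ has finite measure because $f\in L^\rho$. The general complex case follows by splitting into real and imaginary, positive and negative parts. By the triangle inequality, it then suffices to approximate a single $1_A$, with $A\subset O$ measurable and of finite measure, by functions in $C_c(O)$.

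\emph{Step 2 (regularity).} $\Qp^d$ is a second countable, locally compact Hausdorff space. Moreover, $O$ is a countable union of balls (indeed a disjoint one, since balls are either nested or disjoint), and each ball is compact. Hence the Haar measure is a Radon measure, inner regular on sets of finite measure and outer regular. Given $\varepsilon>0$, this yields a compact $K$ and an open $U$ with
\[
K\subset A\subset U\subset O,\qquad |U\setminus K|<\varepsilon.
\]
For the outer approximation, first pick any open $U'\supset A$ with $|U'\setminus A|$ small, and then set $U=U'\cap O$.

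\emph{Step 3 (compact open sandwich).} Each point $x\in K$ has a ball $B_{\gamma_x}(x)\subset U$. These are clopen and cover $K$, so by compactness finitely many of them cover $K$. Let $V$ be their union. Then $V$ is a finite union of balls, hence compact and open, with $K\subset V\subset U\subset O$. Consequently $1_V$ is continuous (being locally constant) with compact support in $O$, so $1_V\in C_c(O)$. Since $1_A$ and $1_V$ agree outside $U\setminus K$,
\[
\|1_A-1_V\|_{L^\rho}^\rho\le |U\setminus K|<\varepsilon .
\]
This completes the density.

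The only step with any real content is Step 2, the regularity of Haar measure on $\Qp^d$ restricted to $O$. I would justify it by the Riesz representation theorem for second countable LCH spaces, in which every locally finite Borel measure is regular. Alternatively, it can be checked directly: the balls of radius $p^\gamma$ with $\gamma\in\mathbb{Z}$ and centers in $\mathbb{Q}^d$ form a countable base, so the outer measure is given by countable ball coverings, and inner regularity is obtained by intersecting with large compact balls. Everything after that is immediate from the clopen nature of $p$-adic balls.
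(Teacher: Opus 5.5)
The paper gives no proof of this lemma; it is imported as a standard fact from \cite{AKS}. Your argument is correct and self-contained, but it takes a different route from the generic textbook proof.

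The generic proof works for any Radon measure on a locally compact Hausdorff space. There one also reduces to $1_A$ and uses regularity to get $K\subset A\subset U$. One then needs Lusin's theorem, or directly Urysohn's lemma, to produce a compactly supported continuous $g$ with $1_K\le g\le 1_U$. You replace Urysohn by the clopen-ness of $p$-adic balls: finitely many balls inside $U$ cover $K$, and their union $V$ has a continuous indicator. This makes your Step 3 trivial.

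Your route also gives a stronger conclusion. The approximants $\sum_k c_k 1_{V_k}$ are finite linear combinations of indicators of balls contained in $O$, i.e.\ Bruhat--Schwartz test functions supported in $O$. So your proof also establishes the density of $\mathcal{D}(\Qp^d)$ in $L^\rho(\Qp^d)$, which the paper asserts without proof in the subsection on test functions. Your bound $|A\,\triangle\,V|<\varepsilon$, combined with absolute continuity of the integral, also yields the fact used at the end of the proof of Theorem \ref{weak convergence}: a locally integrable function whose integral over every ball vanishes is zero a.e.

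What the generic approach buys is validity on arbitrary locally compact Hausdorff spaces. Yours buys elementarity and the finer class of approximants, at the cost of using the ultrametric structure.

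The one nontrivial input, regularity of Haar measure, is the same in both proofs, and your justification via second countability is adequate. Your remark that $O$ is a countable disjoint union of balls is true but not actually needed anywhere in your argument.
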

Let \(O \subset \Qp^d\) be an open set. We define \(L_{\text{loc}}^\rho(O)\) as the space of functions \(f: O \to \mathbb{C}\) such that \(f \in L^\rho(K)\) for every compact subset \(K \subset O\). Clearly, \(L^\rho(O) \subset L_{\text{loc}}^\rho(O)\).

A function \(f\) is said to be integrable on \(\Qp^d\), denoted by \(f \in L^1(\Qp^d)\), if
\[
\int_{\Qp^d} |f(x)| dx<+\infty.
\]
Due to the topology of \(\Qp\), the integral of \(f\) over \(\Qp\) can be expressed as
\[
\int_{\Qp} f(x) dx = \lim_{N \to \infty} \sum_{y=-\infty}^N \int_{S_y} f(x) dx = \sum_{y=-\infty}^\infty \int_{S_y} f(x) dx.
\]

The Fubini's theorem and Lebesgue's dominated convergence theorem hold in \(\Qp^d\), as shown by the following lemmas.

 \begin{lemma}[{\cite{AKS}, Theorem 3.2.3}]\label{fubini}
 If a function \(f: \mathbb{Q}_p^{d_1 + d_2} \to \mathbb{C}\) satisfies that the repeated integral
\[
\int_{\mathbb{Q}_p^{d_1}} \left(\int_{\mathbb{Q}_p^{d_2}} f(x, y) dy\right) dx
\]
is finite, then \(f \in L^1(\mathbb{Q}_p^{d_1 + d_2})\), and the following equalities hold
\[\begin{split}
\int_{\mathbb{Q}_p^{d_1 + d_2}} f(x, y) dx dy
=&\int_{\mathbb{Q}_p^{d_1}} \left(\int_{\mathbb{Q}_p^{d_2}} f(x, y) dy\right) dx\\
=&\int_{\mathbb{Q}_p^{d_2}} \left(\int_{\mathbb{Q}_p^{d_1}} f(x, y) dx\right) dy.
\end{split}
\]

 \end{lemma}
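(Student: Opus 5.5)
The plan is to reduce the statement to the classical Fubini--Tonelli theorem for $\sigma$-finite product measures. The key structural input is that the Haar measure on $\Q_p^{d_1+d_2}$ is exactly the product of the Haar measures on $\Q_p^{d_1}$ and $\Q_p^{d_2}$, and that these spaces are $\sigma$-finite. I read the hypothesis in its standard form: the repeated integral of $|f|$ is finite, with $f$ measurable on $\Q_p^{d_1+d_2}$. For a signed or complex $f$ one cannot expect more.

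First I will establish $\sigma$-finiteness. We have $\Q_p^{d}=\bigcup_{N\in\N}B_N^d$, where $B_N^d=B_N\times\cdots\times B_N$ is compact open, and its measure is $p^{Nd}<\infty$ by the normalization $\int_{\Zp}dx=1$ and the scaling rule $d(xa)=|a|_p\,dx$. Next I will identify the measures. The product measure $dx\,dy$ on $\Q_p^{d_1}\times\Q_p^{d_2}$ is translation invariant, since it is invariant on every rectangle $A\times B$ and hence, by uniqueness of extension from the semiring of rectangles, on all Borel sets. It also gives $\Zp^{d_1+d_2}$ mass $1$. By uniqueness of Haar measure it therefore coincides with the Haar measure on $\Q_p^{d_1+d_2}$, and this is consistent with the paper's definition of $dx$ on $\Q_p^d$ as a product measure. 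The Borel $\sigma$-algebra of $\Q_p^{d_1+d_2}$ is the product of the Borel $\sigma$-algebras, because each factor is second countable: the balls with rational centers form a countable base.

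Then I will prove Tonelli for non-negative measurable $g$. For indicators of rectangles $A\times B$ both iterated integrals equal $|A|\,|B|$. A monotone class ($\pi$--$\lambda$) argument, carried out first inside $B_N^{d_1}\times B_N^{d_2}$ where everything is finite and then letting $N\to\infty$, extends this to all Borel sets. Along the way it yields measurability of the sections $x\mapsto\int 1_E(x,y)\,dy$. Linearity passes the result to simple functions, and the monotone convergence theorem passes it to all non-negative measurable $g$. Applying this to $g=|f|$ gives $\int|f|<\infty$, so $f\in L^1(\Q_p^{d_1+d_2})$.

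Finally, I will write $f=(u^+-u^-)+i(v^+-v^-)$ and apply Tonelli to each of the four non-negative parts, each of which is integrable. The inner integrals are then finite for almost every $x$ (respectively almost every $y$), so subtracting the identities yields both equalities in Lemma \ref{fubini}. The only genuinely delicate step is the monotone class argument for measurability of sections and of the partial integrals. It is entirely standard once $\sigma$-finiteness is in hand, and nothing specifically $p$-adic enters beyond the compact-open exhaustion by balls.
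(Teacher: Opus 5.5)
Your proof is correct, but the paper gives no proof of Lemma \ref{fubini} to compare it with; it simply imports the result as Theorem 3.2.3 of \cite{AKS}.

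You instead derive the lemma from the abstract Fubini--Tonelli theorem. The only inputs you need are:
\begin{enumerate}
\item $\sigma$-finiteness, via the exhaustion $\mathbb{Q}_p^d=\bigcup_N B_N^d$ by compact open balls of measure $p^{Nd}$;
\item the identification of the Borel $\sigma$-algebra of $\mathbb{Q}_p^{d_1+d_2}$ with the product $\sigma$-algebra, by second countability;
\item the identification of the product of the Haar measures with the Haar measure, via translation invariance on rectangles, normalization and uniqueness.
\end{enumerate}
The citation buys brevity. Your route buys a self-contained argument showing that nothing specifically $p$-adic is involved, and above all it makes the hypotheses explicit.

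Your reading of the assumption as finiteness of the repeated integral of $|f|$, for a measurable $f$, is in fact necessary, not merely conventional. Take $d_1=d_2=1$ and $f(x,y)=1_{\mathbb{Z}_p}(y)-1_{p^{-1}+\mathbb{Z}_p}(y)$. Then $\int_{\mathbb{Q}_p}f(x,y)\,dy=0$ for every $x$, so the repeated integral in the statement equals $0$. Yet $\int|f|=\infty$, and the reversed iterated integral is undefined. So the lemma as literally stated is false, and your version is the right one. It also covers the paper's explicit use of the lemma in the proof of Theorem \ref{weak convergence}, where the integrands are non-negative.

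One detail to add: if measurability is meant with respect to the completed Haar measure, first replace $f$ by a Borel function equal to it almost everywhere. For a merely Haar-measurable function, the sections are only guaranteed to be measurable for almost every $x$. This reduction is routine.
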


 \begin{lemma}[{\cite{AKS}, Theorem 3.2.4}]\label{LDC}

Let \(\{f_k\}_{k \in \mathbb{N}} \subset L^1(\Qp^d)\) be a sequence of functions.
If \(\lim_{k \to \infty}f_k(x)=f(x)\) for almost every \(x \in \Qp^d\), and there exists
a function \(\psi \in L^1(\Qp^d)\) such that \(\abs{f_k(x)} \leq \psi(x)\) for all \(k \in \mathbb{N}\)
and almost every \(x \in \Qp^d\), then
\[
\lim_{k \to \infty} \int_{\Qp^d} f_k(x) dx = \int_{\Qp^d} f(x) dx.
\]
\end{lemma}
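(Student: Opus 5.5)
The plan is to deduce the statement from Fatou's lemma for non-negative functions. Fatou's lemma is a general fact of Lebesgue integration with respect to any measure, and it applies to the Haar measure $dx$ on $\Qp^d$. This measure is the product of the one-dimensional Haar measures, it is a Radon measure, and it is $\sigma$-finite because $\Qp^d=\bigcup_{N\in\N}B_N^d$ is a countable union of compact balls of finite measure. So nothing specifically $p$-adic is needed beyond checking that the standard measure-theoretic framework applies; the ultrametric structure plays no role.

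First, I fix a null set $E$ outside of which $f_k(x)\to f(x)$ and $\abs{f_k(x)}\le\psi(x)$ for all $k$; this is possible because a countable union of null sets is null. After modifying $f$ and the $f_k$ on $E$, which changes no integral, $f$ is measurable as a pointwise limit of measurable functions. Passing to the limit in $\abs{f_k}\le\psi$ gives $\abs{f}\le\psi$, so $f\in L^1(\Qp^d)$ and $\int_{\Qp^d} f(x)\,dx$ is well defined.

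Next, I consider the non-negative measurable functions $g_k:=2\psi-\abs{f_k-f}$, which satisfy $g_k\ge 0$ because $\abs{f_k-f}\le\abs{f_k}+\abs{f}\le 2\psi$. Since $g_k\to 2\psi$ pointwise, Fatou's lemma gives
\[
\int_{\Qp^d}2\psi\,dx\le\liminf_{k\to\infty}\int_{\Qp^d}g_k\,dx=\int_{\Qp^d}2\psi\,dx-\limsup_{k\to\infty}\int_{\Qp^d}\abs{f_k-f}\,dx.
\]
Because $\int\psi<\infty$, I may cancel it, which yields $\limsup_{k}\int\abs{f_k-f}\,dx=0$. The conclusion then follows from
\[
\Big|\int_{\Qp^d} f_k(x)\,dx-\int_{\Qp^d} f(x)\,dx\Big|\le\int_{\Qp^d}\abs{f_k-f}\,dx\to 0.
\]
For complex-valued functions this is applied directly to the modulus, so no separate treatment of real and imaginary parts is needed.

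I expect no serious obstacle. If one wants the argument to be self-contained, the only foundational input is Fatou's lemma itself, obtained from the monotone convergence theorem applied to $h_k:=\inf_{j\ge k}g_j\uparrow\liminf_k g_k$. Monotone convergence in turn follows from the definition of the Haar integral as a supremum over simple functions, using countable additivity of $dx$. If one insists on the paper's description of the integral as $\sum_{y}\int_{S_y}f\,dx$, the step to check is that this iterated sum agrees with the Lebesgue integral for integrable $f$. That holds by countable additivity, since the spheres $S_y$ partition $\Qp\setminus\{0\}$ and $\{0\}$ is a null set.
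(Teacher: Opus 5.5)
Your proof is correct: it is the standard derivation of dominated convergence from Fatou's lemma, which holds on any measure space and hence for Haar measure on $\Qp^d$. The paper gives no argument of its own and simply quotes the result from \cite{AKS} as the general Lebesgue theorem for the Haar integral, so your route agrees with its implicit reasoning; your remarks on $\sigma$-finiteness and on the sphere decomposition $\sum_{y}\int_{S_y}$ are harmless but not needed.
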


\subsection{Bruhat-Schwartz test functions in $\Qp^d$}

We provide a brief introduction to  Bruhat-Schwartz test functions in \(\Qp^d\),
mainly follows the literature \cite{AKS}.
\begin{definition}
A complex-valued function $\varphi$ defined on $\Q_p^d$ is called uniformly locally constant,
if \ $\exists \ n\in\mathbb{Z}$ such that
$$
  \varphi(x+y)=\varphi(x) \quad \forall x\in\Q_p^d,\  \forall y\in B_n^d.
$$
\end{definition}
Uniformly locally constant functions are continuous by definition. Let $\mathcal{U}(\Qp^d)$ denote the
space of all uniformly locally constant functions on $\Q_p^d$.
The space $\mathcal{D}(\Qp^d)$ of {\it Bruhat-Schwartz test functions} consists of uniformly locally
constant functions with compact support. Specifically,  any $\psi\in\mathcal{D}(\Qp^d)$ can be written
as a finite linear combination of indicator functions of the form
 $1_{B_\gamma^d(x)}(\cdot)$, where $\gamma\in\Z$, $x\in\Qp^d$, and
 \(B_\gamma^d(x) = B_\gamma(x) \times \dots \times B_\gamma(x) \subset \Qp^d\).
The largest \(\gamma\) for which such a representation holds is called the constancy
parameter of \(\psi\), denoted by \(\ell(\psi)\). Since \(\psi\) has compact support,
there exists a minimal \(m \in \mathbb{Z}\) such that \(\supp(\psi) \subset B_m^d\),
this \(m\) is called the compactness parameter of \(\psi\), denoted by \(m(\psi)\).
Clearly, $\mathcal{D}(\Qp^d)\subset\mathcal{U}(\Qp^d)$.
The space $\mathcal{D}(\Qp^d)$ is endowed with the topology as follows.
A sequence $\{\psi_n\}\subset\mathcal{D}(\Qp^d)$ is called a {\it null sequence} if
there is a fixed pair of $\gamma,\gamma'\in\Z$ such that: \begin{enumerate} \item
each $\psi_n $ is constant on every ball $B_\gamma^d(x)$ of radius $p^\gamma$; \item
each $\psi_n $ is supported by the ball $B_{\gamma'}^d(0)$;\item the sequence $\psi_n $ tends uniformly to zero.
\end{enumerate}
With this topology, the space $\mathcal{D}(\Qp^d)$ is a complete locally convex topological vector space.
The space of test functions $\mathcal{D}(\Qp^d)$ is dense in \(L^\rho(\Qp^d)\) for all \(1 \leq \rho < \infty\).

\subsection{Fourier transform and  spectral set criterion}

We first fix a character $\chi \in \widehat{\Q}_p$ defined by
$$\chi(x)=e^{2\pi i\{x\}},$$
where \(\{x\} = \sum_{n=v}^{-1} a_n p^n\) is the fractional part of \(x\). All characters of \(\Qp\)
are obtained by scaling this fixed character, for any \(y \in \Qp\), define \[\chi_y(x) = \chi(yx).\]
The map \(y \mapsto \chi_y\) is an isomorphism from \(\Qp\) to \(\widehat{\Qp}\).
Note that \(\chi(x) \equiv 1\) for all \(x \in \Zp\), but \(\chi\) is non-constant on \(p^{-1}\Zp\).

The dual group \(\widehat{\Qp^d}\) of \(\Qp^d\) consists of all characters \(\chi_y(\cdot)\) with \(y \in \Qp^d\),
where \(\chi_y(x) = \chi(x \cdot y) = e^{2\pi i \{x \cdot y\}}\).

For \(f \in L^1(\Qp^d)\), the Fourier transform \(\widehat{f}\) is defined as
$$
\widehat{f}(\xi) =\int_{\Q_p^d}f(x) \overline{\chi_{\xi}(x)} dx, \quad ( \xi\in \Q_p^d).
$$

Equivalently, this can be rewritten using \(\chi(-\xi \cdot x) = \overline{\chi(\xi \cdot x)}\),
\[
\widehat{f}(\xi) = \int_{\Qp^d} f(x) \chi(-\xi \cdot x) dx.
\]

The unit ball \(\Zp\) is an additive subgroup of \(\Qp\). Let \(\mathbb{L} \subset \Qp\) be a complete set of
representatives for the cosets of \(\Zp\) in \(\Qp\). Then
\[
\Qp = \mathbb{L} + \Zp = \bigsqcup_{\ell \in \mathbb{L}} (\ell + \Zp),
\]
where \(\bigsqcup\) denotes  disjoint union. This means \((\Zp, \mathbb{L})\) is a tiling pair,
\(\mathbb{L}\) is called a standard quasi-lattice in \(\Qp\), and \(\Zp\) is
the fundamental domain for \(\mathbb{L}\).

For any \(\lambda \in \Qp\), the restriction \(\chi_\lambda|_{\Zp}\) is a character on \(\Zp\).
Two such restrictions coincide if and only if \(\lambda - \lambda' \in \Zp\), that is,
\(\chi_\lambda = \chi_{\lambda'}\) if \(\lambda + \Zp = \lambda' + \Zp\), and
\(\chi_\lambda \neq \chi_{\lambda'}\) if \((\lambda + \Zp) \cap (\lambda' + \Zp) = \emptyset\).
Thus, if \(\mathbb{L}\) is a complete set of distinct coset representatives of \(\Zp\) in \(\Qp\),
the family \(\{\chi_\ell\}_{\ell \in \mathbb{L}}\) is a complete set of distinct characters on \(\Zp\).
This leads to the following lemma.

\begin{lemma}[\cite{Taibleson1975}]\label{lemlattice}
Let $\mathbb{L}$ be a complete set of distinct coset representatives of $\Z_p$ in $\Q_p$.
Then $(\Z_p,\mathbb{L})$ is a spectral pair.
\end{lemma}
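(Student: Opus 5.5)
Since $\mathbb{L}$ is a complete set of coset representatives of $\Zp$ in $\Qp$, the statement reduces to two claims about the family $\{\chi_\ell|_{\Zp}\}_{\ell\in\mathbb{L}}$ in $L^2(\Zp)$ (Haar measure normalized so that $|\Zp|=1$): the characters are pairwise orthogonal, and they span a dense subspace. I would prove these in that order.

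\textbf{Orthogonality.} For $\ell\neq\ell'$ in $\mathbb{L}$ put $\xi=\ell-\ell'$. Since $\ell$ and $\ell'$ lie in different cosets, $\xi\notin\Zp$. Then
\[
\langle \chi_\ell,\chi_{\ell'}\rangle_{\Zp}=\int_{\Zp}\chi(\xi x)\,dx=\widehat{1_{\Zp}}(-\xi).
\]
Because $\xi\notin\Zp$, the map $x\mapsto\chi(\xi x)$ is a nontrivial character of the compact group $\Zp$. To see this, take $x_0=p^{-v(\xi)-1}\in\Zp$; then $\xi x_0\in p^{-1}\Zp^\times$, whose fractional part is nonzero. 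Translation invariance of Haar measure gives $\int_{\Zp}\chi(\xi x)\,dx=\chi(\xi x_0)\int_{\Zp}\chi(\xi x)\,dx$ with $\chi(\xi x_0)\neq1$, so the integral vanishes. Each $\chi_\ell$ has norm $1$, so the family is orthonormal.

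\textbf{Completeness.} By Lemma~\ref{dense}, $C_c(\Zp)$ is dense in $L^2(\Zp)$. A continuous function on the compact set $\Zp$ is uniformly continuous, so it can be uniformly approximated by functions constant on cosets of $B_{-n}=p^n\Zp$, that is, by finite linear combinations of the indicators $1_{a+p^n\Zp}$ with $a\in\Zp$. It therefore suffices to show that each such indicator lies in the span of the $\chi_\ell$.

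\textbf{Key step.} Apply finite Fourier analysis on the finite group $\Zp/p^n\Zp\cong\Z/p^n\Z$. The claimed identity is
\[
1_{a+p^n\Zp}(x)=p^{-n}\sum_{y\in p^{-n}\Zp/\Zp}\chi\bigl(y(x-a)\bigr).
\]
To verify it, note that the characters $x\mapsto\chi(yx)$, for $y$ running over $p^{-n}\Zp/\Zp$, are trivial on $p^n\Zp$. They are pairwise distinct characters of $\Zp/p^n\Zp$, so there are $p^n$ of them, which is all of its characters. The displayed identity is then just the orthogonality relation for characters of a finite group. Each character with $y\in p^{-n}\Zp$ coincides on $\Zp$ with $\chi_\ell$ for the unique $\ell\in\mathbb{L}$ with $\ell\equiv y \pmod{\Zp}$, since $\chi_\ell|_{\Zp}$ depends only on the coset $\ell+\Zp$. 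Hence each indicator lies in the finite span of $\{\chi_\ell\}$. This gives completeness, so $(\Zp,\mathbb{L})$ is a spectral pair.

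The only mildly delicate points are the counting of characters of $\Zp/p^n\Zp$ and the approximation step. As an alternative route to completeness one could use Parseval: for $f=1_{B}$ with $B$ a small ball, check directly that $\sum_{\ell}|\langle f,\chi_\ell\rangle|^2=\|f\|^2$. I would expect the finite-group argument to be the cleanest.
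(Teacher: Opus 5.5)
Your proof is correct. It takes a different route from the paper, which gives no argument of its own: it cites Taibleson and only remarks that $\chi_\lambda|_{\Zp}=\chi_{\lambda'}|_{\Zp}$ exactly when $\lambda-\lambda'\in\Zp$, so that $\{\chi_\ell\}_{\ell\in\mathbb{L}}$ is ``a complete set of distinct characters on $\Zp$''. That route rests on two pieces of abstract harmonic analysis. The first is Pontryagin duality, needed to know that every character of $\Zp$ is the restriction of some $\chi_\lambda$, i.e.\ $\widehat{\Zp}\cong\Qp/\Zp$. The second is the Peter--Weyl theorem, that the dual group of a compact abelian group is an orthonormal basis of its $L^2$ space. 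You replace both with hands-on steps. Orthogonality comes from the translation trick with the explicit point $x_0=p^{-v(\xi)-1}$. Completeness comes from density of $C_c(\Zp)$, uniform approximation by functions constant on cosets of $p^n\Zp$, and the explicit finite Fourier expansion of $1_{a+p^n\Zp}$ over the $p^n$ classes of $p^{-n}\Zp/\Zp$. In particular you never need to know beforehand that the $\chi_\ell$ exhaust $\widehat{\Zp}$; that follows afterwards from completeness. The only character theory you use is $|\widehat{\Z/p^n\Z}|=p^n$, and even this can be bypassed. For $x-a\in\Zp\setminus p^n\Zp$, the same translation trick with $y_0=p^{-v(x-a)-1}\in p^{-n}\Zp$ shows directly that $\sum_{y}\chi(y(x-a))=0$. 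The paper's route is a one-liner and applies verbatim to any compact open subgroup $H$ of a locally compact abelian group $G$, with a set of representatives of $\widehat{G}/H^{\perp}$ as spectrum. Yours is self-contained and uses only tools from the preliminaries. It also yields an explicit expansion showing that indicators of balls of radius $p^{-n}$ involve only the $\chi_\ell$ with $\ell\in p^{-n}\Zp$, which matches the tree picture used elsewhere in the paper.
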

Let \(|\Omega|\) denote the Haar measure of a set \(\Omega \subset \Qp^d\).
The following lemma provides a criterion for a Borel set \(\Omega \subset \Qp^d\)
with \(0 < |\Omega| < \infty\) to be a spectral set.
 \begin{lemma}[\cite{Fan}\label{orthogonal}]\label{criterion}
 Let $\Omega\subset\Q_p^d$ be a Borel set with $0<|\Omega|<\infty$. Then
 $(\Omega, \Lambda)$ is a spectral pair  if and only if
  \begin{equation}
 \sum_{\lambda\in\Lambda}|\widehat{1}_{\Omega}(\xi-\lambda)|^2=|\Omega|^2, \quad \forall \xi\in\Q_p^d.
  \end{equation}
 \end{lemma}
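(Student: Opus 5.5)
The plan is the standard Hilbert-space argument, carried out with the exponentials $e_\lambda := \chi_\lambda|_\Omega \in L^2(\Omega)$. First I will record the basic identities. Since $|\chi_\lambda|=1$, we have $\|e_\lambda\|^2 = |\Omega|$. For $\xi,\lambda\in\Q_p^d$,
$$
\langle e_\xi, e_\lambda\rangle_\Omega=\int_\Omega \chi((\xi-\lambda)\cdot x)\,dx=\widehat{1}_\Omega(\lambda-\xi).
$$
Because $1_\Omega$ is real-valued, $\widehat{1}_\Omega(-\eta)=\overline{\widehat{1}_\Omega(\eta)}$, and hence $|\langle e_\xi,e_\lambda\rangle_\Omega| = |\widehat{1}_\Omega(\xi-\lambda)|$. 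The sum in the lemma is therefore $\sum_{\lambda\in\Lambda}|\langle e_\xi,e_\lambda\rangle_\Omega|^2$.

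For the ``only if'' direction, I will assume $\{e_\lambda\}_{\lambda\in\Lambda}$ is an orthogonal basis. Then $\{|\Omega|^{-1/2}e_\lambda\}$ is an orthonormal basis, and Parseval's identity applied to $e_\xi$ gives
$$
\sum_{\lambda\in\Lambda}|\Omega|^{-1}|\langle e_\xi,e_\lambda\rangle_\Omega|^2=\|e_\xi\|^2=|\Omega|,
$$
which is exactly the claimed identity after multiplying by $|\Omega|$.

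For the ``if'' direction, the argument has three steps.

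\emph{Orthogonality.} Fix $\lambda_0\in\Lambda$ and put $\xi=\lambda_0$. The $\lambda=\lambda_0$ term already equals $|\widehat{1}_\Omega(0)|^2=|\Omega|^2$. Since all terms are nonnegative and the total is $|\Omega|^2$, every other term vanishes. Hence $\{e_\lambda\}_{\lambda\in\Lambda}$ is an orthogonal system, and the normalized system is orthonormal.

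\emph{Each $e_\xi$ lies in the closed span.} The hypothesis now says that Bessel's inequality for $e_\xi$ with respect to this orthonormal system holds with equality. By the Pythagorean decomposition, $e_\xi$ minus its orthogonal projection onto $\overline{\mathrm{span}}\{e_\lambda\}$ has norm zero, so $e_\xi\in\overline{\mathrm{span}}\{e_\lambda:\lambda\in\Lambda\}$ for every $\xi\in\Q_p^d$.

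\emph{Completeness.} It remains to show that $\{e_\xi\}_{\xi\in\Q_p^d}$ spans a dense subspace of $L^2(\Omega)$. I expect this to be the only non-formal step. Suppose $f\in L^2(\Omega)$ is orthogonal to every $e_\xi$. Since $|\Omega|<\infty$, Cauchy--Schwarz gives $g:=f1_\Omega\in L^1(\Q_p^d)$. Orthogonality says precisely that $\widehat{g}(\xi)=0$ for all $\xi$. By the uniqueness theorem for the Fourier transform on $L^1(\Q_p^d)$, we get $g=0$ a.e., so $f=0$ in $L^2(\Omega)$. The uniqueness theorem is standard for locally compact Abelian groups. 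If one wants a self-contained proof, pair $g$ against the test functions $\varphi\in\mathcal D(\Q_p^d)$: such $\varphi$ are Fourier transforms of test functions, and by Fubini (Lemma \ref{fubini}) this gives $\int g\varphi=0$ for all of them. Density of $\mathcal D(\Q_p^d)$ then forces $g=0$.

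Combining the three steps, $\{e_\lambda\}_{\lambda\in\Lambda}$ is a complete orthogonal system, i.e.\ $(\Omega,\Lambda)$ is a spectral pair.
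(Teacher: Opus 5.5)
Your argument is correct. The paper gives no proof of this lemma and simply imports it from \cite{Fan}, where it is established by essentially the same standard Jorgensen--Pedersen-type reasoning: Parseval for necessity, and for sufficiency orthogonality from the $\lambda=\lambda_0$ term, equality in Bessel's inequality, and completeness of the full family $\{\chi_\xi\}_{\xi\in\Q_p^d}$ via Fourier uniqueness. The one step worth stating more carefully is the last one. Note that $g=f1_\Omega$ also lies in $L^2(\Q_p^d)$, so the vanishing of $\int g\varphi$ for every $\varphi\in\mathcal{D}(\Q_p^d)$ forces $g=0$ by the $L^2$-density of $\mathcal{D}(\Q_p^d)$ (or by testing against indicators of balls); $L^1$-density alone is not the right duality.
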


Define the family of exponential functions associated with a discrete set $\Lambda\subset\Q_p^d$ as

$$
\mathcal{E}(\Lambda)=\Big\{\chi_\lambda(x)=e^{2\pi i\{\lambda\cdot x\}}: \lambda\in\Lambda\Big\}.
$$
We say that $(\Omega, \Lambda)$ is an orthogonal pair,
if the system  $\mathcal{E}(\Lambda)$ forms an orthogonal system in $L^2(\Omega)$.

For a function $f:\Q_p^d\rightarrow \mathbb{C}$, we write
$$
    \mathcal{Z}(f)=\Big\{x\in\Q_p^d: f(x)=0\Big\}.
$$
The following lemma characterizes orthogonal pairs.
\begin{lemma}[\cite{K2025}\label{orthogonal}]
 If $\Omega\subset\Q_p^d$ is a Borel set with $0<|\Omega|<\infty$, then $(\Omega, \Lambda)$ is an orthogonal pair
 if and only if
  \begin{equation}\label{2.3}
 \Lambda-\Lambda\subset \mathcal{Z}(\widehat{1}_{\Omega})\cup\{0\}.
  \end{equation}
 \end{lemma}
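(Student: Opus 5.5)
The plan is to compute the inner products of the exponentials in $\mathcal{E}(\Lambda)$ directly and identify them with values of $\widehat{1}_{\Omega}$; the lemma then amounts to unpacking the definition of an orthogonal system.

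First, for $\lambda,\lambda'\in\Lambda$, I will use $\chi_\lambda(x)\overline{\chi_{\lambda'}(x)}=\chi((\lambda-\lambda')\cdot x)=\chi(-(\lambda'-\lambda)\cdot x)$ and the definition of the Fourier transform to write
$$
\langle \chi_\lambda,\chi_{\lambda'}\rangle_{\Omega}=\int_{\Omega}\chi((\lambda-\lambda')\cdot x)\,dx=\int_{\Q_p^d}1_\Omega(x)\chi(-(\lambda'-\lambda)\cdot x)\,dx=\widehat{1}_{\Omega}(\lambda'-\lambda).
$$
This integral is well defined because $|\Omega|<\infty$, so $1_\Omega\in L^1(\Q_p^d)$. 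In particular $\|\chi_\lambda\|^2_{L^2(\Omega)}=|\Omega|>0$. Hence no element of $\mathcal{E}(\Lambda)$ vanishes in $L^2(\Omega)$, and orthogonality of the system means exactly that $\widehat{1}_{\Omega}(\lambda'-\lambda)=0$ whenever $\lambda\neq\lambda'$.

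Second, I will note that $\mathcal{Z}(\widehat{1}_{\Omega})$ is symmetric. Since $1_\Omega$ is real-valued and $\chi(\xi\cdot x)=\overline{\chi(-\xi\cdot x)}$, we have $\widehat{1}_{\Omega}(-\xi)=\overline{\widehat{1}_{\Omega}(\xi)}$. So the condition above can equivalently be stated for $\lambda-\lambda'$ in place of $\lambda'-\lambda$. Thus $(\Omega,\Lambda)$ is an orthogonal pair if and only if every nonzero difference $\lambda-\lambda'$ with $\lambda,\lambda'\in\Lambda$ lies in $\mathcal{Z}(\widehat{1}_{\Omega})$, which is precisely \eqref{2.3}. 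The difference $0$ occurs only when $\lambda=\lambda'$ and is excluded by adjoining $\{0\}$.

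There is no real obstacle here; the only point needing care is the sign convention in the Fourier transform, which the symmetry $\widehat{1}_{\Omega}(-\xi)=\overline{\widehat{1}_{\Omega}(\xi)}$ disposes of.
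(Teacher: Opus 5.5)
Your proof is correct. The identity $\langle\chi_\lambda,\chi_{\lambda'}\rangle_\Omega=\widehat{1}_\Omega(\lambda'-\lambda)$, together with $\|\chi_\lambda\|^2_{L^2(\Omega)}=|\Omega|>0$, is the standard argument behind this lemma; the paper itself gives no proof and simply cites \cite{K2025}.

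The symmetry step $\widehat{1}_\Omega(-\xi)=\overline{\widehat{1}_\Omega(\xi)}$ is harmless but unnecessary. As $(\lambda,\lambda')$ runs over ordered pairs of distinct points of $\Lambda$, the difference $\lambda'-\lambda$ already runs over all of $(\Lambda-\Lambda)\setminus\{0\}$.
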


The following lemma shows that both spectra and tiling complements of a bounded measurable
set of positive measure are necessarily uniformly discrete.

\begin{lemma}\label{discretespectra}
 Let $\Omega\subset\mathbb{Q}_{p}^{d}$ be a Borel set with $0<|\Omega|<\infty$.
\begin{enumerate}
\item If $(\Omega,\Lambda)$ is a spectral pair, then $\Lambda$ is uniformly discrete.
\item If $(\Omega,T)$ is a tiling pair, then $T$ is uniformly discrete.
\end{enumerate}
 \end{lemma}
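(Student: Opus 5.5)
Both parts rest on the same mechanism. Two points of the relevant set that are too close would give two characters, or two translates, that are nearly indistinguishable on a fixed neighborhood, and that contradicts orthogonality or disjointness. Ultrametric structure makes this clean: every function involved is locally constant, at least approximately.

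For (1), I will use Lemma \ref{orthogonal}. Every difference $\lambda-\lambda'$ with $\lambda\neq\lambda'$ in $\Lambda$ lies in $\mathcal{Z}(\widehat{1}_\Omega)$. The plan is to show that $\widehat{1}_\Omega$ does not vanish on a neighborhood of $0$. Since $1_\Omega\in L^1$, $\widehat{1}_\Omega$ is continuous and $\widehat{1}_\Omega(0)=|\Omega|>0$. Hence there is $\gamma\in\Z$ with $|\widehat{1}_\Omega(\xi)|>|\Omega|/2$ for all $\xi\in B_\gamma^d$. Concretely, $|\widehat{1}_\Omega(\xi)-|\Omega||\le\int_\Omega|\chi(\xi\cdot x)-1|\,dx$, and we choose $R$ with $|\Omega\setminus B_R^d|$ small. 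For $|\xi|_p\le p^{-R}$ the character $\chi(\xi\cdot x)$ equals $1$ on $B_R^d$, since $\chi\equiv1$ on $\Zp$. This gives the continuity bound directly, with no further analysis. It follows that $|\lambda-\lambda'|_p>p^{\gamma}$ for distinct $\lambda,\lambda'\in\Lambda$, so $\delta(\Lambda)\ge p^{\gamma+1}$.

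For (2), I will argue by measure. Take the same $R$ with $|\Omega\cap B_R^d|>|\Omega|/2$, and set $A=\Omega\cap B_R^d$. Suppose $t,t'\in T$ are distinct with $|t-t'|_p\le p^{-K}$, where $K$ is to be chosen. I want $A+t$ and $A+t'$ to overlap in positive measure, which contradicts the a.e. disjointness of the translates $\Omega+t$ and $\Omega+t'$. Translating by a small $h=t'-t$ need not preserve $A$ exactly, because $A$ is only Borel. So I will use continuity of translation in $L^1$: $\|1_A(\cdot-h)-1_A\|_{L^1}\to0$ as $|h|_p\to0$. This holds by density of $\mathcal{D}(\Qp^d)$ in $L^1$ (Lemma \ref{dense} and the remark after it), because test functions are exactly invariant under translations by $h\in B^d_{\ell(\psi)}$.

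Choose $K$ so that this $L^1$ difference is less than $|A|$ whenever $|h|_p\le p^{-K}$. Then $|(A+t)\cap(A+t')|=|A|-\tfrac12\|1_{A+t}-1_{A+t'}\|_{L^1}>0$. This contradicts the tiling property. Hence $\delta(T)>p^{-K}$.

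I expect the main obstacle to be the density and translation-continuity step in (2), since $\Omega$ is merely Borel and not open. The approximation argument above handles it. Part (1) is routine once the continuity of $\widehat{1}_\Omega$ near $0$ is in hand.
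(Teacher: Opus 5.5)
Your proposal is correct and follows the paper's proof. Part (1) is the same argument: continuity of $\widehat{1}_\Omega$ at $0$ with $\widehat{1}_\Omega(0)=|\Omega|>0$, combined with $\Lambda-\Lambda\subset\mathcal{Z}(\widehat{1}_\Omega)\cup\{0\}$, and your truncation estimate supplies that continuity explicitly. Part (2) is likewise the paper's argument via continuity of $F(h)=|\Omega\cap(\Omega+h)|=|\Omega|-\tfrac12\|1_\Omega-1_{\Omega+h}\|_{L^1}$, which your $L^1$ translation-continuity step justifies directly; the truncation to $A=\Omega\cap B_R^d$ there is harmless but unnecessary, since $1_\Omega\in L^1$ already.
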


\begin{proof}
(1) By the fact $\widehat{1}_{\Omega}(0)=|\Omega|>0$ and
the continuity of the function $\widehat{1}_{\Omega}(x)$, there exists a $n_0\in \mathbb{Z}$, such that
$\widehat{1}_{\Omega}(x)\neq0$ for all $x\in B_{n_0}^d$. This, together Lemma \ref{orthogonal}, implies
that $|\lambda-\lambda'|_p\geq p^{n_0}$ for different $\lambda,\lambda'\in\Lambda$,
which means $\Lambda$ is uniformly discrete with separation constant $\delta(\Lambda)\geq\delta_\Omega$,
where
\[
\delta_\Omega:=\min\Big\{|\xi|_p:  \xi\in \Qp^d, \quad \widehat{1}_\Omega(\xi)=0\Big\}.
\]

(2) Assume $(\Omega,T)$ is a tiling pair. By definition,  we have
\[
\sum_{t \in T} 1_{\Omega}(x - t) = 1\ \text{for} \ \text{a.e. } x \in \Qp^d.
\]
For distinct $t,t'\in T$, the translates $\Omega+t$ and $\Omega+t'$ are disjoint up to measure zero, so
\[
|\Omega\cap(\Omega+(t'-t))|=0.
\]
Define the function
\[
F(x):=|\Omega\cap(\Omega+x)| = \int_{\mathbb{Q}_p^d} 1_\Omega(y)1_\Omega(y+x)\,dy =
(1_\Omega * \widetilde{1}_\Omega)(x),
\]
where $\widetilde{1}_\Omega(y)=1_\Omega(-y)$. Then $F$ is continuous, and $F(0)=|\Omega|>0$.
Hence there exists $r_0\in\mathbb{Z}$ such that $F(x)>0$ for all $x\in B_{r_0}^d$.
For distinct $t,t'\in T$, the disjointness gives $F(t'-t)=0$, so $t'-t\notin B_{r_0}^d$.
Thus $|t-t'|_p \ge p^{r_0}$ for all distinct $t,t'\in T$,
proving that $T$ is uniformly discrete with separation constant $\delta(T)\geq\sigma_\Omega$,
where

\[
\sigma_\Omega:=\min\Big\{|x|_p:  x\in \Qp^d,\quad |\Omega\cap(\Omega+x)| =0\Big\}.
\]

\end{proof}

\section{Proof of main results}\label{main results}
This section proves the paper's three core theorems, the invariance of
functional tiles under weak convergence, the spectrality of product sets, and the equivalence between the
spectrality of cylindric sets and their lower-dimensional factors.

\subsection{Functional tiles}

Let $f\in L^1(\Q_p^d)$ be a non-negative function, and  $\Lambda\subset \Q_p^d$ be a discrete set.
Recall that $f$ tiles $\Q_p^d$ by translation along $\Lambda\subset\Q_p^d$, if

$$
\sum_{\lambda\in\Lambda}f(x-\lambda)=1, \quad a.e. \ x\in \Q_p^d.
$$

The following lemma establishes a critical link between the spectrality of a measurable
set $\Omega$ and the tiling property of a specific function derived from $\Omega$.

\begin{lemma}\label{spectral}
Let $\Omega\subset\Q_p^d$ be a bounded measurable set with positive measure, and define
$$
     f(x):=|\widehat{1}_{\Omega}(x)|^2/|\Omega|^2,\quad x\in \Q_p^d.
$$
Then $(\Omega, \Lambda)$  is a spectral pair if and only if $(f, \Lambda)$  is a tiling pair.
\end{lemma}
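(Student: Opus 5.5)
The plan is to reduce the statement directly to the spectral criterion of Lemma \ref{criterion}. First we check that $f$ is an admissible candidate for a functional tile. Since $\Omega$ is bounded with $0<|\Omega|<\infty$, we have $1_\Omega\in L^1(\Q_p^d)\cap L^2(\Q_p^d)$. By Plancherel on $\Q_p^d$ (the Haar measure normalized by $|\Z_p^d|=1$ is self-dual for the character $\chi$), we get $\int_{\Q_p^d}|\widehat{1}_\Omega(\xi)|^2d\xi=\|1_\Omega\|_{L^2}^2=|\Omega|$. Hence $f\ge 0$, $f\in L^1(\Q_p^d)$, and $\int f=1/|\Omega|$. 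Moreover $f$ is continuous, since $\widehat{1}_\Omega$ is the Fourier transform of an $L^1$ function. In the $p$-adic setting it is even uniformly locally constant: if $\Omega\subset B_m^d$, then $\widehat{1}_\Omega(\xi+\eta)=\widehat{1}_\Omega(\xi)$ for $|\eta|_p\le p^{-m}$, because $\chi(-\eta\cdot x)=1$ for $x\in B_m^d$.

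For the direction ``$\Rightarrow$'', suppose $(\Omega,\Lambda)$ is a spectral pair. Lemma \ref{criterion} gives $\sum_{\lambda\in\Lambda}|\widehat{1}_\Omega(\xi-\lambda)|^2=|\Omega|^2$ for every $\xi$. Dividing by $|\Omega|^2$ yields $\sum_\lambda f(\xi-\lambda)=1$ for all $\xi$, in particular almost everywhere. By Lemma \ref{discretespectra}, $\Lambda$ is discrete, so $(f,\Lambda)$ is a tiling pair.

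For the direction ``$\Leftarrow$'', suppose $\sum_\lambda f(x-\lambda)=1$ for a.e.\ $x$. By Lemma \ref{criterion}, it suffices to upgrade this to every $\xi\in\Q_p^d$. This upgrade is the only real obstacle. I would use the local constancy just noted. Set $F(\xi):=\sum_\lambda f(\xi-\lambda)$, a sum of non-negative terms. Each term is invariant under translation by $B_{-m}^d$, so $F(\xi+\eta)=F(\xi)$ for all $\eta\in B_{-m}^d$, termwise and hence for the (possibly infinite) sum. Thus $F$ is constant on each ball $\xi+B_{-m}^d$. Each such ball has positive measure, so it contains a point where $F=1$, and therefore $F\equiv1$ on the ball. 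Consequently $F\equiv 1$ everywhere, and Lemma \ref{criterion} shows that $(\Omega,\Lambda)$ is a spectral pair.

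If one prefers to avoid the $p$-adic local constancy argument, an alternative route is the following. First show that $\Lambda$ is an orthogonal family: a.e.\ tiling with $F\le1$ together with the Bessel-type inequality $\sum_\lambda|\langle e_\xi,e_\lambda\rangle|^2\le|\Omega|\,\|e_\xi\|^2$ gives Parseval on a dense set. Then extend by continuity. The locally constant argument is, however, much shorter and is the one I would write. The only technical point to verify carefully is that boundedness of $\Omega$ is what yields the uniform constancy radius $p^{-m}$.
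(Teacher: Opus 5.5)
Your proof is correct and takes the same route as the paper: a direct reduction to Lemma \ref{criterion}. The paper's one-line proof silently identifies the everywhere identity in Lemma \ref{criterion} with the almost-everywhere tiling condition. Your argument supplies the details it omits: since $\widehat{1}_\Omega$ is invariant under translations by $B_{-m}^d$ when $\Omega\subset B_m^d$, the sum $F=\sum_{\lambda\in\Lambda}f(\cdot-\lambda)$ is constant on such balls, so $F=1$ a.e.\ forces $F\equiv 1$. You also add the Plancherel check that $f\in L^1(\Q_p^d)$ and the appeal to Lemma \ref{discretespectra} for discreteness of $\Lambda$.
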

\begin{proof}
By Lemma \ref{criterion}, $(\Omega,\Lambda)$ forms a spectral pair if and only if
\begin{equation}\label{tiling}
\sum_{\lambda\in\Lambda}f(x-\lambda)=\sum_{\lambda\in\Lambda}|\widehat{1}_{\Omega}(x-\lambda)|^2/|\Omega|^2=1, \quad \forall x\in\Q_p^d.
\end{equation}
The equation (\ref{tiling}) holds if and only if $(f, \Lambda)$ is a tiling pair.
\end{proof}

The proof below leverages properties of Bruhat-Schwartz test functions and
weak convergence of sequences.

\begin{proof}[Proof of Theorem \ref{weak convergence}]

We aim to show that
\[
\sum_{\lambda \in \Lambda} f(x - \lambda) = 1 \quad \text{a.e.} \ x \in \Q_p^d.
\]
Let $\psi \in \mathcal{D}(\Qp^d)$ be a positive Bruhat-Schwartz test function  such that $\int_{\Qp^d} \psi(x) \, dx = 1$.
Let $\gamma_0$ denote the constancy parameter of $\psi$, i.e., $\psi$ is constant on each ball of radius $p^{\gamma_0}$, and let $m_0$ denote the compactness parameter, i.e., $\supp \psi \subset B_{m_0}^d$.

For each $n\in\mathbb{N}$, define
\[
\Psi_n(x) = \sum_{\lambda \in \Lambda_n} \psi(x + \lambda), \quad \Psi(x) = \sum_{\lambda \in \Lambda} \psi(x + \lambda).
\]

\textbf{Claim:} 
For every $\phi\in L^1(\mathbb{Q}_p^d)$, we have
\[
\lim_{n\to\infty}\int_{\mathbb{Q}_p^d}\phi(x)\Psi_n(x)\,dx
=
\int_{\mathbb{Q}_p^d}\phi(x)\Psi(x)\,dx.
\]

\textit{Proof of the claim.}
Fix $x\in \Qp^d$. We will show that $\Psi_n(x)\to \Psi(x)$ pointwise.

Since $\supp \psi \subset B_{m_0}^d$, we have $\psi(x+\lambda)\neq 0$ only if
 $\lambda\in B_{m_0}^d(-x)$. Let $\gamma(x)\in \mathbb{Z}$ be chosen such that
\[
B_{m_0}^d(-x)\subset B_{\gamma(x)}^d.
\]

Indeed, it suffices to take $p^{\gamma(x)}\ge \max\{p^{m_0}, |x|_p\}$,
then $B_{m_0}^d(-x)\subset B_{\gamma(x)}^d$.

Choose $\eta_1\in\mathbb{N}$ such that $p^{-\eta_1}<p^{\gamma_0}$,
so that $\psi$ is constant on balls of radius $p^{-\eta_1}$. By the weak convergence of
$\Lambda_n$ to $\Lambda$, there exists $N(x)\in\mathbb{N}$ (depending on $x$ through $\gamma(x)$)
such that for all $n\ge N(x)$,
\[
\Lambda_n\cap B_{\gamma(x)}^d \subset \Lambda + B_{-\eta_1}^d,\qquad
\Lambda\cap B_{\gamma(x)}^d \subset \Lambda_n + B_{-\eta_1}^d.
\]

We now show that each $\lambda\in \Lambda_n\cap B_{m_0}^d(-x)$ is matched to a unique
$\lambda_*\in \Lambda\cap B_{\gamma(x)}^d $ such that $|\lambda-\lambda_*|_p\le p^{-\eta_1}$.

Indeed, if one $\lambda\in\Lambda_n$ were matched to
two distinct points  $\lambda_*,\lambda_*'\in\Lambda$ both satisfy
$$|\lambda-\lambda_*|_p\le p^{-\eta_1},\quad |\lambda-\lambda_*'|_p\le p^{-\eta_1}.$$
Then, ultrametric inequality yields
\[
|\lambda_*-\lambda_*'|_p \le \max\big\{|\lambda_*-\lambda|_p,\ |\lambda-\lambda_*'|_p\big\} \le p^{-\eta_1}.
\]
Since $\Lambda$ is uniformly discrete with  separation constant $\delta(\Lambda)\ge\delta>0$,
so $|\lambda_*-\lambda_*'|_p\ge\delta$. But, for sufficiently large $\eta_1$, we have
$p^{-\eta_1}<\delta$, a contradiction. Therefore each $\lambda\in \Lambda_n\cap B_{m_0}^d(-x)$ maps to exactly one
$\lambda_*\in \Lambda\cap B_{\gamma(x)}^d$.  The symmetric inclusion ensures the reverse matching is also valid.

Hence, for all $n\ge N(x)$,
\[
\Psi_n(x)=\sum_{\lambda\in\Lambda_n}\psi(x+\lambda)
        =\sum_{\lambda_*\in\Lambda}\psi(x+\lambda_*)=\Psi(x).
\]

Therefore,

\begin{equation}\label{limit}
\lim_{n\to\infty}\Psi_n(x)=\Psi(x)\qquad \forall x\in\mathbb Q_p^d.
\end{equation}

Next,  we establish a uniform bound on  $\Psi_n$. Since $\Lambda_n$ is uniformly discrete
with separation constant $\delta(\Lambda_n)\ge \delta>0$, choose an integer $r\in\mathbb{Z}$ such that
\[
p^r<\delta.
\]
Then the balls $\{B_r^d(\lambda): \lambda\in\Lambda_n\}$ are pairwise disjoint. Indeed,
if $B_r^d(\lambda)\cap B_r^d(\lambda')\neq\emptyset$ for $\lambda\neq\lambda'$, then by the
ultrametric property, one ball contains the other,  since they have the same radius, they coincide,
implying $|\lambda-\lambda'|_p\le p^r<\delta$, contradicting the separation of $\Lambda_n$.

Now fix $x\in \Qp^d$. The condition $\psi(x+\lambda)\neq 0$ implies $\lambda\in B_{m_0}^d(-x)$.
The number of such $\lambda$ is bounded by the maximal number of disjoint balls of radius $p^r$ that
can fit inside $B_{m_0}^d(-x)$. Since $B_{m_0}^d(-x)$ has Haar measure $p^{d m_0}$ and each $B_r^d(\lambda)$
has Haar measure $p^{d r}$, we obtain
\[
\#(\Lambda_n\cap B_{m_0}^d(-x)) \le p^{d m_0}\cdot p^{-d r}=p^{d(m_0-r)}=:C,
\]
a constant independent of $n$ and $x$. Since $|\psi|\le \|\psi\|_\infty$,
therefore

\begin{equation}\label{uniforml}
\sup_{n}\|\Psi_n\|_\infty\le C\|\psi\|_\infty=: M,\qquad \|\Psi\|_\infty\le  C\|\psi\|_\infty=:M.
\end{equation}
uniformly over all $n\in\mathbb{N}$ and $x\in\Qp^d$.

From (\ref{limit}), for every $\phi\in L^1(\Qp^d)$,
we obtain  $$\lim_{n\to\infty}\phi(x)\Psi_n(x)=\phi(x)\Psi(x), \ \forall x\in\mathbb Q_p^d.$$

From inequality (\ref{uniforml}), we have
$$|\phi(x)\Psi_n(x)|\leq M|\phi(x)|\in L^1(\Qp^d).$$ So,
by Lemma \ref{LDC}, the dominated convergence theorem, we conclude
\[
\lim_{n\to\infty}\int_{\mathbb Q_p^d} \phi(x)\Psi_n(x)\,dx
= \int_{\mathbb Q_p^d} \phi(x)\Psi(x)\,dx.
\]

Now, since $(f, \Lambda_n)$ is a tiling pair, we have
\[
\sum_{\lambda \in \Lambda_n} f(x - \lambda) = 1 \quad \text{a.e.}, \forall  x \in \Q_p^d.
\]
Multiplying both sides by $\psi(x)$ and integrating, we get
\[
\int_{\Qp^d} \psi(x) \sum_{\lambda \in \Lambda_n} f(x - \lambda) \, dx = \int_{\Qp^d} \psi(x) \, dx = 1.
\]
Change variable $x \mapsto x + \lambda$ and apply Lemma \ref{fubini}, we obtain
\[
\int_{\Qp^d} f(x) \Psi_n(x) \, dx = 1.
\]

From the {\bf claim}, we get
\[
\lim_{n \to \infty} \int_{\Qp^d} f(x) \Psi_n(x) \, dx = \int_{\Qp^d} f(x) \Psi(x) \, dx.
\]
Hence,
\[
\int_{\Qp^d} f(x) \Psi(x) \, dx = 1.
\]
But note that
\[
\int_{\Qp^d} f(x) \Psi(x) \, dx =
\int_{\Qp^d} \psi(x) \sum_{\lambda \in \Lambda} f(x - \lambda) \, dx.
\]
Thus
\[
\int_{\Qp^d} \psi(x) \Big(\sum_{\lambda \in \Lambda}f(x - \lambda)-1\Big) \, dx = 0.
\]

Define
\[
F(x)=\sum_{\lambda\in\Lambda}f(x-\lambda).
\]
Since $\Lambda$ is uniformly discrete, for any compact open ball $B_\gamma^d$,
the set $$\Big\{\lambda\in\Lambda: B_\gamma^d\cap (B_\gamma^d+\lambda)\neq\emptyset\Big\}$$ is finite.
Furthermore, $f\geq0$ and $f\in L^1(\Qp^d)$,
hence
\[
\sum_{\lambda\in\Lambda}\int_{B_\gamma^d} f(x-\lambda)dx = \sum_{\lambda\in\Lambda}\int_{B_\gamma^d-\lambda}f(y)dy < \infty.
\]
Thus $F\in L^1_{\text{loc}}(\Qp^d)$.

Now, the identity above holds for every nonnegative Bruhat-Schwartz test function
$\psi\in\mathcal{D}(\Qp^d)$ with  $\int_{\Qp^d}\psi(x)\,dx=1$.
In particular, taking $\psi=1_{B_\gamma^d}$ (up to a constant factor) gives
\[
\int_{B_\gamma^d} \big(F(x)-1\big)dx = 0.
\]
for every compact open ball $B_\gamma^d$.
Since compact open balls generate the Borel $\sigma$-algebra, and Haar measure is a regular
Borel measure on $\Qp^d$, a locally integrable function whose integral over every compact open ball
vanishes must equal zero almost everywhere. Hence
\[
F(x)-1=0,\quad \text{a.e. }x\in\Qp^d.
\]
Thus,

\[
\sum_{\lambda \in \Lambda} f(x - \lambda) = 1 \quad \text{a.e.}\  x\in \Qp^d.
\]
Therefore, $(f, \Lambda)$ is a tiling pair.
\end{proof}

By using Lemmas \ref{discretespectra}, \ref{spectral} and Theorem \ref{weak convergence}, we obtain the following corollary.

\begin{corollary}\label{coro-weak limit}
Let $\Omega\subset\Q_p^d$ be a bounded, measurable set.\\
\indent {\rm (1)} \ Suppose that $(\Omega, \Lambda_n)$ is a spectral pair for each $n$.
If $\Lambda_n$ weakly converges to $\Lambda$, then $(\Omega, \Lambda)$ is also a spectral pair.\\
\indent {\rm (2)} \ Suppose that $(\Omega, \Lambda_n)$ is a tiling pair for each $n$. If $\Lambda_n$ weakly converges
to $\Lambda$, then $(\Omega, \Lambda)$is also a tiling pair.
\end{corollary}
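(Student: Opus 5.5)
Both parts reduce to Theorem \ref{weak convergence} once the relevant functions are identified and the uniform separation hypothesis is verified.

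\textbf{Part (1).} Set
\[
f(x)=|\widehat{1}_{\Omega}(x)|^2/|\Omega|^2 .
\]
This $f$ is non-negative. It lies in $L^1(\Qp^d)$ by Plancherel, since $\int f = |\Omega|/|\Omega|^2<\infty$.

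By Lemma \ref{spectral}, each $(f,\Lambda_n)$ is a tiling pair. To apply Theorem \ref{weak convergence} we need a uniform lower bound on the separation constants. The proof of Lemma \ref{discretespectra}(1) gives this: every spectrum of $\Omega$ has separation at least $p^{n_0}$, where $n_0$ is chosen so that $\widehat{1}_\Omega\neq 0$ on $B_{n_0}^d$. This bound depends only on $\Omega$, not on $n$, so we may take $\delta=p^{n_0}$.

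Theorem \ref{weak convergence} then shows that $(f,\Lambda)$ is a tiling pair, i.e. $\sum_\lambda f(x-\lambda)=1$ for a.e. $x$. The remaining step is to pass from this to the pointwise identity required by Lemma \ref{criterion}. Lemma \ref{spectral} is stated as an equivalence with a.e. tiling, but I will verify the point anyway. Since $\Omega$ is bounded, $\widehat{1}_\Omega$ is uniformly locally constant. Indeed, if $\Omega\subset B_m^d$, then $\widehat{1}_\Omega$ is constant on cosets of $B_{-m}^d$. Hence $F(x)=\sum_{\lambda\in\Lambda}f(x-\lambda)$ is constant on each ball of radius $p^{-m}$. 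An a.e. identity $F=1$ therefore holds everywhere, because every such ball has positive measure. Lemma \ref{criterion} now gives that $(\Omega,\Lambda)$ is a spectral pair.

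\textbf{Part (2).} Take $f=1_\Omega$. This is non-negative and integrable because $\Omega$ is bounded and measurable. Lemma \ref{discretespectra}(2) shows that every tiling complement of $\Omega$ has separation at least $p^{r_0}$, where $r_0$ depends only on $\Omega$ through the continuity of $x\mapsto |\Omega\cap(\Omega+x)|$. So the $\Lambda_n$ are uniformly separated, and Theorem \ref{weak convergence} gives $\sum_{\lambda\in\Lambda}1_\Omega(x-\lambda)=1$ a.e. This is exactly the statement that $(\Omega,\Lambda)$ is a tiling pair.

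The main obstacle is the uniformity of the separation constant, which Theorem \ref{weak convergence} requires. This is why the argument uses the explicit $\Omega$-dependent bounds from the proof of Lemma \ref{discretespectra}, rather than just the fact that each $\Lambda_n$ is discrete. A secondary, minor point in part (1) is upgrading "a.e." to "for all $\xi$", which the local constancy argument above handles.
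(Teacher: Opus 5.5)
Your proposal is correct and takes the same approach as the paper. The paper obtains the corollary in one line from Lemma~\ref{discretespectra} (uniform separation depending only on $\Omega$), Lemma~\ref{spectral}, and Theorem~\ref{weak convergence}. Your local-constancy argument, which upgrades the a.e.\ tiling identity to the everywhere identity required by Lemma~\ref{criterion}, fills in a detail that the paper's proof of Lemma~\ref{spectral} passes over.
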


\subsection{Proof of Theorem \ref{product}}

Recall
$$
\mathcal{E}(\Lambda)=\Big\{\chi_\lambda(x)=e^{2\pi i\{\lambda\cdot x\}}:
\ \lambda\in\Lambda\Big\},\quad \forall x\in\Q_p^d.
$$
$(\Leftarrow)$ Let (\(\Omega_1, \Lambda_1)\) and \((\Omega_2, \Lambda_2)\) be spectral pairs.
We first show that the set $\mathcal{E}(\Lambda)$ is orthogonal in $L^2(\Omega_1\times\Omega_2)$.

\[\begin{split}
\langle\chi_\lambda,\chi_{\lambda^{\prime}}\rangle_\Omega
=&\iint_\Omega\chi_\lambda(x,y)\overline{\chi_{\lambda^{\prime}}(x,y})dxdy\\
=&\int_{\Omega_1}\chi_{\lambda_1-\lambda_1^{\prime}}(x)(\int_{\Omega_2}\chi_{\lambda_2-\lambda_2^{\prime}}(y)dy)dx
\end{split}\]
If $\lambda_1\neq\lambda_1^{\prime}$ in $\Lambda_1$, then the inner product equals to zero since $(\Omega_1,\Lambda_1)$
is a spectral pair.
If $\lambda_1=\lambda_1^{\prime}$, but $\lambda_2\neq\lambda_2^{\prime}$, then it also equals to zero
since $(\Omega_2,\Lambda_2)$ is a spectral pair. This proves orthogonality of $\Lambda$.
In order to prove the completeness, what we have to prove is that for any $f\in L^2(\Omega)$
such that $\langle \chi_\lambda,f\rangle_\Omega=0,\quad \forall \lambda\in\Lambda$, then $f=0 \quad a.e.$

The inner product is

\[\begin{split}
\langle \chi_\lambda, f\rangle_\Omega
=&\iint_\Omega\chi_\lambda(x,y)\overline{f(x,y)}dxdy\\
=&\int_{\Omega_2}\chi_{\lambda_2}(y)\bigg(\int_{\Omega_1}\chi_{\lambda_1}(x)\overline{f(x,y)}dx\bigg)dy
\end{split}\]
If we fixed $\lambda_1$, and the double integral vanishes for all $\lambda_2\in\Lambda_2$,
then, since $(\Omega_2,\Lambda_2)$ is a spectral pair, the integral
$\int_{\Omega_1}\chi_{\lambda_1}(x)\overline{f(x,y)}dx=0$ for
almost all $y$. But $\lambda_1$ is arbitrary, so the spectral pair property of $(\Omega_1,\Lambda_1)$
implies that $f=0$ for almost all $x$. Thus, we proved that $\mathcal{E}(\Lambda)$ is
complete on $L^2(\Omega)$.

(\( \Rightarrow\)) Let \((\Omega_1\times\Omega_2,\ \Lambda_1\times\Lambda_2)\) be a spectral pair in $\Q_p^{d_1}\times\Q_p^{d_2}$.
We prove that \((\Omega_2, \Lambda_2)\) is a spectral pair in $\Q_p^{d_2}$ (the proof that \((\Omega_1, \Lambda_1)\) is a
spectral pair in $\Q_p^{d_1}$ is identical).
First, we check orthogonality of $\mathcal{E}(\Lambda_2)$ in $L^2(\Omega_2)$. Let $\alpha, \beta$ be two distinct points in $\Lambda_2$,
then the points $\lambda=(\lambda_1, \alpha)$ and $\lambda^{\prime}=(\lambda_1, \beta)$ are
two distinct points in $\Lambda$. Since $(\Omega_1\times\Omega_2, \Lambda_1\times\Lambda_2)$ is a spectral pair, the inner product
$$
\langle \chi_\lambda, \chi_{\lambda^{\prime}}\rangle_{\Omega_1\times\Omega_2}=0.
$$

On the other hand,
$$\langle \chi_\lambda, \chi_{\lambda^{\prime}}\rangle_{\Omega_1\times\Omega_2}
=|\Omega_1|\langle \chi_\alpha, \chi_\beta\rangle_{\Omega_2}.$$
Since $|\Omega_1|>0$, we must have
$$
\langle \chi_\alpha, \chi_\beta\rangle_{\Omega_2}=0,
$$
which implies the orthogonality of $\mathcal{E}(\Lambda_2)$ in $L^2(\Omega_2)$.

Now we shall prove that this system is also complete in $L^2(\Omega_2)$. Take any
$\varphi\in L^2(\Omega_2)$ such that
\begin{equation}\label{3.7}
\langle \chi_\alpha, \varphi\rangle_{\Omega_2}=0 \quad \forall \alpha\in \Lambda_2.
\end{equation}
Consider a function $\Phi$ defined on $\Omega=\Omega_1\times\Omega_2$ by
$$
\Phi(x,y):=\varphi(y)\quad (\text{constant in }x).
$$
Then $\Phi\in L^2(\Omega)$. We claim that $\Phi$ is orthogonal in $L^2(\Omega)$ to all the elements of
the system $\mathcal{E}(\Lambda)$.
Indeed, let  $\lambda\in\Lambda$ be any element of $\Lambda$ , then it has the form
$\lambda=(\lambda_1, \lambda_2)\in \Lambda_1\times\Lambda_2$. This implies that
\begin{equation}\label{3.8}
\langle \chi_\lambda, \Phi\rangle_{\Omega}= C_{\lambda_1}
\cdot\langle \chi_{\lambda_2}, \varphi\rangle_{\Omega_2},
\end{equation}
where $C_{\lambda_1}:=\int_{\Omega_1}\chi_{\lambda_1}(x) dx$ is a constant, due to (\ref{3.7}),
the inner product must vanish. This confirms that
$$
 \langle \chi_\lambda, \Phi\rangle_{\Omega}=0, \quad \forall \lambda\in\Lambda.
$$
By the completeness of the system $\mathcal{E}(\Lambda_1\times\Lambda_2)$ in $L^2(\Omega_1\times\Omega_2)$,
the function $\Phi$ vanishes identically almost everywhere, so $\varphi=0$ a.e. This implies the desired results.

\subsection{Cylindric sets in $\Q_p^d$}

In this section, we assume that $\Omega$ is a cylindric set in $\Q_p^d$, namely
\begin{equation}\label{cylindric set}
    \Omega=B_{\gamma}(a)\times\Theta.
\end{equation}
where $B_{\gamma}(a)\subset\Q_p$ is a ball, and $\Theta$ is a bounded measurable set in $\Q_p^{d-1}$.
Since the spectrality of a set is invariant under dilations and translations, we can assume that
$B_{\gamma}(a)=\Z_p$ for simplicity. First, we give a very useful lemma.

\begin{lemma}\label{ballspectrum}
We have $\widehat{1}_{\Zp}(\xi)=\left\{ \begin{array}{rcl}
1,& \mbox{if}& |\xi|_p\leq1; \\
0 ,& \mbox{if} & |\xi|_p>1.
\end{array}\right.$
 \end{lemma}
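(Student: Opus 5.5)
We compute the integral $\widehat{1}_{\Zp}(\xi)=\int_{\Zp}\chi(-\xi x)\,dx$ directly, distinguishing the cases $|\xi|_p\le 1$ and $|\xi|_p>1$.

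\emph{Case $|\xi|_p\le 1$.} For every $x\in\Zp$ we have $|\xi x|_p\le 1$, so $-\xi x\in\Zp$ and the fractional part $\{-\xi x\}$ vanishes. Hence $\chi(-\xi x)=1$ on $\Zp$, and the integral equals the Haar measure of $\Zp$, which is $1$ by our normalization.

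\emph{Case $|\xi|_p>1$.} Here we use the standard orthogonality trick for characters of a compact group. The map $x\mapsto\chi(-\xi x)$ is a character of the compact additive group $\Zp$, and it is nontrivial on $\Zp$. To see nontriviality, take $x_0=\xi^{-1}p^{-1}\cdot u$ with $u$ chosen so that $x_0\in\Zp$. Concretely, write $|\xi|_p=p^{k}$ with $k\ge 1$, and set $x_0=p^{k-1}$. Then $x_0\in\Zp$ and $|\xi x_0|_p=p$, so $\xi x_0$ has valuation $-1$. Its fractional part is $a_{-1}p^{-1}$ with $a_{-1}\neq 0$, and therefore $\chi(-\xi x_0)=e^{-2\pi i a_{-1}/p}\neq 1$.

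Now apply translation invariance of Haar measure, substituting $x\mapsto x+x_0$, which maps $\Zp$ onto itself:
\[
\widehat{1}_{\Zp}(\xi)=\int_{\Zp}\chi(-\xi(x+x_0))\,dx=\chi(-\xi x_0)\,\widehat{1}_{\Zp}(\xi).
\]
This uses that $\chi$ is additive, i.e.\ $\chi(a+b)=\chi(a)\chi(b)$, which holds because $\{a+b\}-\{a\}-\{b\}\in\Z$. Since $\chi(-\xi x_0)\neq 1$, it follows that $\widehat{1}_{\Zp}(\xi)=0$.

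The only point requiring care is the nontriviality of the character in the second case, namely producing $x_0\in\Zp$ with $\{\xi x_0\}\notin\Z$. The explicit choice $x_0=p^{k-1}$ settles it. An alternative route is to decompose $\Zp$ into the $p$ cosets of $p\Zp$ and sum the resulting roots of unity, but the translation argument is shorter.
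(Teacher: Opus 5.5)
Your proof is correct and follows the paper's approach: the character is identically $1$ on $\Zp$ when $|\xi|_p\le 1$, and when $|\xi|_p>1$ a translation within $\Zp$ multiplies the integral by a character value $\neq 1$, which forces it to vanish. The one addition is your explicit witness $x_0=p^{k-1}$, which the paper leaves unspecified. The garbled first attempt at choosing $x_0$ (before ``Concretely'') can simply be dropped.
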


 \begin{proof} Recall that
 $$
    \widehat{1}_{\Zp}(\xi)
    = \int_{\Zp}e^{-2\pi i\{\xi x\}}dx.
 $$
 When $|\xi|_p\le 1$, the integrand is equal to $1$, so $\widehat{1}_{\Zp}(\xi)=1$.
 When $|\xi|_p>1$, making a change of variable $x = y-z$ with $z\in\Zp$ chosen
 such that $ e^{2\pi i\{\xi z\}}\not=1$,
 we get $$\widehat{1}_{\Zp}(\xi) = e^{2\pi i\{\xi z\}} \widehat{1}_{\Zp}(\xi).$$
 It follows that $\widehat{1}_{\Zp}(\xi)=0$ for $|\xi|_p>1$, which means
 $\widehat{1}_{\Zp}(\xi)=0$ for $\xi\notin\Zp$.
 \end{proof}

We will denote a point $x\in\Q_p^d$ as $x=(x_1, x_2)\in \Q_p\times\Q_p^{d-1}$.
Due to (\ref{cylindric set}), the Fourier transform $\widehat{1}_{\Omega}$ of the indicator
function $1_{\Omega}$ of the set
 $\Omega$ is given by
\begin{equation}
\widehat{1}_{\Omega}(\xi)=\widehat{1}_{\Z_p}(\xi_1)\widehat{1}_{\Theta}(\xi_2),
\quad \xi=(\xi_1, \xi_2)\in\Q_p\times\Q_p^{d-1} .
\end{equation}

By Lemma \ref{ballspectrum}, we have
$$\mathcal{Z}(\widehat{1}_{\Z_p})=\Qp\setminus \Zp,$$
so one has
$$\mathbb{L}\setminus\{0\}\subset\mathcal{Z}(\widehat{1}_{\Zp}).$$
Then it is easy to prove the following.

\begin{lemma}\label{lattice point}
 Let $\lambda,\lambda^{\prime}$ be two distinct points in $\Q_p^d$. The  exponential
 functions $\chi_{\lambda}$ and $\chi_{\lambda^{\prime}}$
 are mutually orthogonal in $L^2(\Omega)$ if and only if
 \[
 \lambda_1-\lambda^{\prime}_1\notin\Zp\ \text{or}\
\lambda_2-\lambda^{\prime}_2\in\mathcal{Z}(\widehat{1}_{\Theta}).
\]
\end{lemma}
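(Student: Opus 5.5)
The plan is to compute the inner product directly and read off when it vanishes, using the product structure of $\Omega$ (after the normalization $B_\gamma(a)=\Zp$) together with Lemma \ref{ballspectrum}.

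For distinct $\lambda=(\lambda_1,\lambda_2)$ and $\lambda'=(\lambda_1',\lambda_2')$ in $\Q_p\times\Q_p^{d-1}$, we first write
\[
\langle \chi_\lambda,\chi_{\lambda'}\rangle_\Omega=\int_\Omega \chi\big((\lambda-\lambda')\cdot x\big)\,dx=\overline{\widehat{1}_\Omega(\lambda-\lambda')}
\]
(up to the sign convention $\widehat{1}_\Omega(-\xi)=\overline{\widehat{1}_\Omega(\xi)}$). This means orthogonality is equivalent to $\lambda-\lambda'\in\mathcal{Z}(\widehat{1}_\Omega)$, which is exactly the pairwise content of Lemma \ref{orthogonal}.

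Next we use the factorization $\widehat{1}_{\Omega}(\xi)=\widehat{1}_{\Zp}(\xi_1)\widehat{1}_{\Theta}(\xi_2)$, which follows from Fubini (Lemma \ref{fubini}) applied to the product set. The product vanishes if and only if one of the factors vanishes. By Lemma \ref{ballspectrum}, $\widehat{1}_{\Zp}(\xi_1)=0$ exactly when $\xi_1\notin\Zp$. Setting $\xi=\lambda-\lambda'$ then gives the equivalence: $\chi_\lambda\perp\chi_{\lambda'}$ if and only if $\lambda_1-\lambda_1'\notin\Zp$ or $\lambda_2-\lambda_2'\in\mathcal{Z}(\widehat{1}_\Theta)$.

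There is no real obstacle here. The only points to handle carefully are the conjugation and sign in identifying the inner product with $\widehat{1}_\Omega$, and the observation that the zero set of a product of complex numbers is the union of the zero sets of the factors.
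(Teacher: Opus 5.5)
Your proposal is correct and matches the argument the paper intends: the paper calls the lemma ``easy'' right after displaying $\widehat{1}_{\Omega}(\xi)=\widehat{1}_{\Zp}(\xi_1)\widehat{1}_{\Theta}(\xi_2)$ and $\mathcal{Z}(\widehat{1}_{\Zp})=\Qp\setminus\Zp$, and your identification $\langle\chi_\lambda,\chi_{\lambda'}\rangle_\Omega=\overline{\widehat{1}_\Omega(\lambda-\lambda')}$ supplies the one step it leaves implicit. The conjugation and sign are harmless, since both $\Zp$ and $\mathcal{Z}(\widehat{1}_\Theta)$ are symmetric under $\xi\mapsto-\xi$.
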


\begin{lemma}\label{lemintegerspctrum}
 Assume that $\Omega= \Zp\times\Theta$ is a spectral set, where $\Theta\subset\Q_p^{d-1}$ is bounded, measurable set.
 Then $\Omega$ admits a spectrum $\Gamma$ satisfying
\begin{equation}\label{integer spectrum}
              \Gamma\subset\mathbb{L}\times\Q_p^{d-1}.
\end{equation}
\end{lemma}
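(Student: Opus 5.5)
Starting from an arbitrary spectrum $\Lambda$ of $\Omega=\Zp\times\Theta$, I will replace each first coordinate $\lambda_1$ by the canonical representative in $\mathbb{L}$ of its coset $\lambda_1+\Zp$. Concretely, define the map $\pi:\Qp\to\mathbb{L}$ sending $t$ to the unique $\ell\in\mathbb{L}$ with $t-\ell\in\Zp$. Set
\[
\Gamma:=\{(\pi(\lambda_1),\lambda_2):\ (\lambda_1,\lambda_2)\in\Lambda\}.
\]
By construction $\Gamma\subset\mathbb{L}\times\Q_p^{d-1}$. It remains to show that $\Gamma$ is again a spectrum.

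\emph{Injectivity and orthogonality.} Take distinct $\lambda,\lambda'\in\Lambda$. By Lemma \ref{lattice point}, either $\lambda_1-\lambda_1'\notin\Zp$ or $\lambda_2-\lambda_2'\in\mathcal{Z}(\widehat{1}_\Theta)$.
\begin{itemize}
\item In the first case, $\pi(\lambda_1)\neq\pi(\lambda_1')$. Since both lie in $\mathbb{L}$, we get $\pi(\lambda_1)-\pi(\lambda_1')\notin\Zp$.
\item In the second case, $\lambda_2\neq\lambda_2'$, because $\widehat{1}_\Theta(0)=|\Theta|>0$.
\end{itemize}
Either way the images in $\Gamma$ are distinct, so the map $\lambda\mapsto(\pi(\lambda_1),\lambda_2)$ is injective. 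Applying Lemma \ref{lattice point} again shows that the corresponding characters are orthogonal in $L^2(\Omega)$. Note that the orthogonality condition depends on $\lambda_1$ only through its coset modulo $\Zp$, and $\pi$ preserves cosets.

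\emph{Completeness.} I will use the criterion of Lemma \ref{criterion}. By the product formula for $\widehat{1}_\Omega$ and Lemma \ref{ballspectrum},
\[
|\widehat{1}_\Omega(\xi-\lambda)|^2=1_{\Zp}(\xi_1-\lambda_1)\,|\widehat{1}_\Theta(\xi_2-\lambda_2)|^2 .
\]
The factor $1_{\Zp}(\xi_1-\lambda_1)$ depends only on the coset $\lambda_1+\Zp$, so it is unchanged when $\lambda_1$ is replaced by $\pi(\lambda_1)$. Since $\lambda\mapsto(\pi(\lambda_1),\lambda_2)$ is a bijection from $\Lambda$ onto $\Gamma$,
\[
\sum_{\gamma\in\Gamma}|\widehat{1}_\Omega(\xi-\gamma)|^2=\sum_{\lambda\in\Lambda}|\widehat{1}_\Omega(\xi-\lambda)|^2=|\Omega|^2\quad\text{for all }\xi .
\]
Lemma \ref{criterion} then gives that $(\Omega,\Gamma)$ is a spectral pair. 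This identity already implies orthogonality, so the previous step is really only needed for injectivity.

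\emph{Main obstacle.} The only delicate point is injectivity. Two elements of $\Lambda$ with the same $\lambda_2$ and first coordinates in the same $\Zp$-coset would collapse to one point of $\Gamma$. This is ruled out by orthogonality, as shown above: if $\lambda_1-\lambda_1'\in\Zp$ then orthogonality forces $\widehat{1}_\Theta(\lambda_2-\lambda_2')=0$, hence $\lambda_2\neq\lambda_2'$. Once injectivity is in hand, the rest follows directly from the stated lemmas.
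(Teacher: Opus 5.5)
Your proof is correct. The construction of $\Gamma$ and the orthogonality check via Lemma \ref{lattice point} match the paper; the difference is in how completeness is established.

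The paper argues on the function side. Since $\lambda_1-\ell_\lambda\in\Zp$ and $x\in\Zp$, one has $\chi_\lambda=\chi_{\gamma_\lambda}$ pointwise on $\Omega$. Hence any $f\in L^2(\Omega)$ orthogonal to $\mathcal{E}(\Gamma)$ is orthogonal to $\mathcal{E}(\Lambda)$, and so vanishes.

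You argue on the Fourier side, through the criterion of Lemma \ref{criterion}. Your key observation is that $|\widehat{1}_\Omega(\xi-\lambda)|^2$ depends on $\lambda_1$ only through its coset modulo $\Zp$. Because that criterion is a sum over the set $\Gamma$, you genuinely need the map $\lambda\mapsto(\pi(\lambda_1),\lambda_2)$ to be injective. You correctly derive injectivity from orthogonality together with $\widehat{1}_\Theta(0)=|\Theta|>0$.

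The paper's argument never needs injectivity. If points did collapse, this would affect neither the orthogonality among distinct elements of $\Gamma$ nor the transfer of completeness. Indeed, the pointwise identity $\chi_\lambda|_\Omega=\chi_{\gamma_\lambda}|_\Omega$ says the two exponential systems are literally the same family of functions, so it would also give orthogonality and injectivity at no cost.

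So the paper's route is shorter and more elementary. Yours reduces everything to a single coset-invariance computation in the spectral criterion, which makes the orthogonality step redundant except for injectivity, as you note. Both are valid.
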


\begin{proof}

Since $\Omega$ is a spectral set in $ \Qp^d$, it has a spectrum $\Lambda \subset \Qp^d$.
For each $\lambda = (\lambda_1, \lambda_2) \in \Lambda$, where $\lambda_1\in \Qp, \ \lambda_2\in\Qp^{d-1}$,
by definition of $\mathbb{L}$, there exists a unique $\ell_\lambda \in \mathbb{L}$ with $\lambda_1 - \ell_\lambda \in \Zp$.
Let $z_\lambda = \lambda_1 - \ell_\lambda \in \Zp$, so $\lambda = (\ell_\lambda + z_\lambda, \lambda_2)$.
Define $$\gamma_\lambda = (\ell_\lambda, \lambda_2) \in \mathbb{L} \times \Qp^{d-1}.$$
Let $\Gamma = \{\gamma_\lambda \mid \lambda \in \Lambda\}$, and we show that $\Gamma$ is a spectrum of $\Omega$.

 For any distinct $\gamma_\lambda, \gamma_{\lambda'} \in \Gamma$,
if $\ell_\lambda \neq \ell_{\lambda'}$,
since $\ell_\lambda, \ell_{\lambda'}$ are distinct coset representatives, then, we have that  $\ell_\lambda - \ell_{\lambda'} \notin \Zp$. By Lemma \ref{ballspectrum}, we have
\[
 \ell_\lambda - \ell_{\lambda'} \in \mathcal{Z}(\widehat{1}_{\Zp}).
\]
Thus, by Lemma \ref{lattice point}, $\chi_{\gamma_\lambda}$ and $ \chi_{\gamma_{\lambda'}}$ are mutually orthogonal in $L^2(\Omega)$.

If $\ell_\lambda = \ell_{\lambda'}$,
since $\gamma_\lambda \neq \gamma_{\lambda'}$, we have $\lambda_2 \neq \lambda'_2$. For $\lambda \neq\lambda' \in \Lambda$,
$\chi_\lambda$ and $\chi_{\lambda'}$ are mutually orthogonal in $L^2(\Omega)$. Their first coordinates satisfy
\[
\lambda_1 - \lambda_1' = (\ell_\lambda + z_\lambda) - (\ell_\lambda + z_{\lambda'}) = z_\lambda - z_{\lambda'} \in \Zp.
\]
Thus, $\lambda_1 - \lambda_1' \notin \mathcal{Z}(\widehat{1}_{\Zp})$. By Lemma \ref{lattice point}, orthogonality implies
\[
\lambda_2 - \lambda_2' \in \mathcal{Z}(\widehat{1}_{\Theta}).
\]
But $\gamma_{\lambda_2} - \gamma_{\lambda'_2} = \lambda_2 - \lambda_2'$, so, by Lemma \ref{lattice point},
 $\chi_{\gamma_\lambda}$ and $\chi_{\gamma_{\lambda'}}$ are mutually orthogonal in $L^2(\Omega)$.

In order to prove the completeness of $\{\chi_{\gamma_\lambda}: \gamma_\lambda\in \Gamma\}$ in $L^2(\Omega)$,
we have to show that, for any $f \in L^2(\Omega)$ satisfies $\langle \chi_{\gamma_\lambda}, f \rangle = 0$
for all $\gamma_\lambda \in \Gamma$, we have  $f = 0$ a.e.

For any $\lambda \in \Lambda$, note $\lambda = (\ell_\lambda + z_\lambda, \lambda_2)$ with $z_\lambda \in \Zp$. For $(x, y) \in \Zp \times \Theta$, we have $\{{z_\lambda x}\} = 0$.
Thus
\[
\{{\lambda \cdot (x, y)}\} = \{{(\ell_\lambda + z_\lambda)x + \lambda_2 y}\} = \{{\ell_\lambda x + \lambda_2 y}\} = \{{\gamma_\lambda \cdot (x, y)}\}
\]
Hence, $\chi_\lambda = \chi_{\gamma_\lambda}$ on $\Omega$, so $\langle \chi_\lambda, f \rangle_\Omega
= \langle \chi_{\gamma_\lambda}, f\rangle_\Omega=0$.
Since $\mathcal{E}(\Lambda)$ is complete in $L^2(\Omega)$, we have that $f=0$ almost everywhere.
Thus $\Gamma \subset \mathbb{L} \times \Qp^{d-1}$ is a spectrum for $\Omega$.
\end{proof}

The following lemma is a consequence of the orthogonal decomposition.
\begin{lemma}\label{lembasespectrum}
 Suppose that $(\Omega,\Lambda)$ is a spectral pair with $\Lambda\subset\mathbb{L}\times\Q_p^{d-1}$.
Then for each $\ell\in\mathbb{L}$, the fiber
\begin{equation}\label{equ-base spectrum}
\Gamma_\ell:=\Big\{\gamma\in\Q_p^{d-1}:(\ell, \gamma)\in\Lambda \Big\},
\end{equation}
constitutes a spectrum for $\Theta$.
\end{lemma}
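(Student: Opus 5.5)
The plan is to split $L^2(\Omega)$ according to the quasi-lattice $\mathbb{L}$ in the first coordinate and then read off the fibers. Write $\Omega=\Zp\times\Theta$. By Lemma~\ref{lemlattice}, $\{\chi_\ell\}_{\ell\in\mathbb{L}}$ is an orthogonal basis of $L^2(\Zp)$, so every $F\in L^2(\Omega)$ has an expansion
\[
F(x,y)=\sum_{\ell\in\mathbb{L}}\chi_\ell(x)F_\ell(y),\qquad F_\ell(y)=\int_{\Zp}F(x,y)\overline{\chi_\ell(x)}\,dx .
\]
This gives the orthogonal decomposition $L^2(\Omega)=\bigoplus_{\ell\in\mathbb{L}}H_\ell$, where
\[
H_\ell=\{\chi_\ell(x)g(y):g\in L^2(\Theta)\}.
\]
Each $H_\ell$ is isometric to $L^2(\Theta)$, up to the factor $|\Zp|=1$.

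Since $\Lambda\subset\mathbb{L}\times\Q_p^{d-1}$, each exponential $\chi_{(\ell,\gamma)}(x,y)=\chi_\ell(x)\chi_\gamma(y)$ with $(\ell,\gamma)\in\Lambda$ lies in $H_\ell$. So $\mathcal{E}(\Lambda)$ is the disjoint union over $\ell$ of the families
\[
\mathcal{E}_\ell=\{\chi_\ell\otimes\chi_\gamma:\gamma\in\Gamma_\ell\}\subset H_\ell .
\]
These families lie in mutually orthogonal subspaces.

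\emph{Orthogonality.} For distinct $\gamma,\gamma'\in\Gamma_\ell$, the points $(\ell,\gamma)$ and $(\ell,\gamma')$ are distinct elements of $\Lambda$, so
\[
0=\langle\chi_{(\ell,\gamma)},\chi_{(\ell,\gamma')}\rangle_\Omega=|\Zp|\,\langle\chi_\gamma,\chi_{\gamma'}\rangle_\Theta ,
\]
as in the proof of Theorem~\ref{product}. Hence $\{\chi_\gamma\}_{\gamma\in\Gamma_\ell}$ is orthogonal in $L^2(\Theta)$. Equivalently, one can use Lemma~\ref{lattice point}: here $\lambda_1-\lambda_1'=0\in\Zp$, which forces $\gamma-\gamma'\in\mathcal{Z}(\widehat 1_\Theta)$.

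\emph{Completeness.} Take $g\in L^2(\Theta)$ orthogonal to all $\chi_\gamma$ with $\gamma\in\Gamma_\ell$, and set $G(x,y)=\chi_\ell(x)g(y)\in H_\ell$. For $(\ell',\gamma)\in\Lambda$ with $\ell'\ne\ell$ we have
\[
\langle\chi_{(\ell',\gamma)},G\rangle_\Omega=\Big(\int_{\Zp}\chi_{\ell'-\ell}\Big)\langle\chi_\gamma,g\rangle_\Theta=0,
\]
because $\widehat 1_{\Zp}(\ell-\ell')=0$ by Lemma~\ref{ballspectrum}, as $\ell-\ell'\notin\Zp$. For $\ell'=\ell$ the inner product equals $\langle\chi_\gamma,g\rangle_\Theta=0$ by assumption. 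So $G$ is orthogonal to all of $\mathcal{E}(\Lambda)$, completeness gives $G=0$ a.e., and therefore $g=0$ a.e. Thus $\Gamma_\ell$ is a spectrum of $\Theta$.

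\emph{Main obstacle: $\Gamma_\ell$ could be empty.} If $\Gamma_\ell=\emptyset$, the completeness argument applies to every $g\in L^2(\Theta)$: all of $H_\ell\neq\{0\}$ is orthogonal to $\mathcal{E}(\Lambda)$. Since $|\Theta|>0$ we can take $g=1_\Theta$, and then $G=\chi_\ell\otimes 1_\Theta\neq 0$ contradicts completeness. So the same argument shows every fiber is nonempty. With that settled, the rest is the routine Fubini bookkeeping above.
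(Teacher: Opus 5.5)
Your proposal is correct and follows essentially the same route as the paper. Both arguments use the decomposition $L^2(\Zp\times\Theta)=\bigoplus_{\ell\in\mathbb{L}}\chi_\ell\cdot L^2(\Theta)$, get orthogonality from $\langle\chi_{(\ell,\gamma)},\chi_{(\ell,\gamma')}\rangle_\Omega=|\Zp|\,\langle\chi_\gamma,\chi_{\gamma'}\rangle_\Theta$, and get completeness by testing $\chi_\ell(x)g(y)$ against $\mathcal{E}(\Lambda)$. Your explicit empty-fiber remark makes visible what the paper's completeness step covers only implicitly (with $\Gamma_\ell=\emptyset$ it would force every $g\in L^2(\Theta)$ to vanish), and citing Lemma~\ref{ballspectrum} for $\int_{\Zp}\chi_{\ell'-\ell}=0$ is a slightly more direct justification than the paper's.
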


\begin{proof}
We make use of the orthogonal decomposition
\[
L^2(\mathbb{Z}_p\times\Theta)=\bigoplus_{\ell\in\mathbb{L}}\chi_\ell \cdot L^2(\Theta),
\]
where
 \[
\chi_\ell \cdot L^2(\Theta)=\{f\in L^2(\mathbb{Z}_p\times\Theta): f(x,y)=\chi_\ell(x)g(y)\text{ for some }g\in L^2(\Theta)\}.
\]
This decomposition follows from the fact that $\{\chi_\ell:\ell\in\mathbb{L}\}$ forms an orthonormal basis for $L^2(\mathbb{Z}_p)$.

\textbf{Orthogonality of $\mathcal{E}(\Gamma_\ell)$.} For distinct $\gamma,\gamma'\in\Gamma_\ell$,
the points $(\ell,\gamma)$ and $(\ell,\gamma')$ are distinct elements of $\Lambda$. Since $(\Omega,\Lambda)$
is a spectral pair, the corresponding exponentials are orthogonal in $L^2(\Omega)$
\[
0=\langle \chi_{(\ell,\gamma)},\chi_{(\ell,\gamma')}\rangle_\Omega
=|\mathbb{Z}_p|\,\langle \chi_\gamma,\chi_{\gamma'}\rangle_\Theta.
\]
Thus $\mathcal{E}(\Gamma_\ell)$ is orthogonal in $L^2(\Theta)$.

\textbf{Completeness of $\mathcal{E}(\Gamma_\ell)$.} Let $g\in L^2(\Theta)$ be such that
\[
\langle \chi_\gamma,g\rangle_\Theta=0\quad \forall \gamma\in\Gamma_\ell.
\]
Define $f\in L^2(\mathbb{Z}_p\times\Theta)$ by
\[
f(x,y)=\chi_\ell(x)g(y).
\]
We have
\[
\left\langle
\chi_{(\ell',\gamma')},f
\right\rangle_{(\mathbb Z_p\times\Theta)}
=
\left\langle
\chi_{\ell'},\chi_\ell
\right\rangle_{\mathbb Z_p}
\left\langle
\chi_{\gamma'},g
\right\rangle_{\Theta}.
\]
Let $\lambda=(\ell',\gamma')\in\Lambda$ be any element of $\Lambda$ satisfing \(\ell'\ne\ell\),
then $\ell-\ell'\notin\mathbb{Z}_p$, by Lemma~\ref{lattice point}, the first factor equals to zero.

When \(\ell'=\ell\), the second factor vanishes by the assumed orthogonality of \(g\) to \(\mathcal{E}(\Gamma_\ell)\).
These two cases establish that \(f\) is orthogonal to the entire spectral basis,
and completeness of $\mathcal{E}(\Lambda)$ in $L^2(\Omega)$ gives \(f=0\) a.e., hence \(g=0\) a.e.
Thus $\mathcal{E}(\Gamma_\ell)$ is complete in $L^2(\Theta)$. Therefore, $\Gamma_\ell$ is a spectrum for $\Theta$.
\end{proof}

Proof of Theorem \ref{main} now follows easily from the previous lemmas.

\begin{proof}[Proof of Theorem \ref{main}]

Let $\Omega=B_{\gamma}(a)\times\Theta$ be a cylindric set in $\Q_p^d$.
Without loss of generality, we can assume that $B_{\gamma}(a)=\Z_p$.

If $\Omega$ is  spectral, Lemma \ref{lemintegerspctrum} guarantees existence of a spectrum  $\Lambda\subset\mathbb{L}\times\Q_p^{d-1}$.
Then Lemma \ref{lembasespectrum} implies that $\Theta$ is a spectral set. Conversely, assume that $\Theta$ is a spectral set with
$\Gamma\subset\Q_p^{d-1}$ is a spectrum. Then Lemma \ref{lemlattice} and Theorem \ref{product} implies that $\Lambda=\mathbb{L}\times\Gamma$
is a spectrum for $\Omega$, and hence $\Omega$ is a spectral set.
\end{proof}

\textbf{Remark}.
The implication
 $$\Omega_1\times\Omega_2\ \mbox{spectral} \Rightarrow \Omega_1 \ \text{spectral and}\ \Omega_2 \ \text{spectral}$$
is very important for the Fuglede's conjecture. If we suppose that the `` spectral $\Rightarrow$
tile " half of the  conjecture is true in $\Q_p^{d_1+d_2}$ and the `` tile $\Rightarrow$ spectral "
half to be true in $\Q_p^{d_1}$ and in $\Q_p^{d_2}$, then it follows that if
$\Omega_1\times\Omega_2\subset\Q_p^{d_1}\times\Q_p^{d_2}$  is spectral
then so are $\Omega_1\subset\Q_p^{d_1}$ and $\Omega_2\subset\Q_p^{d_2}$.
In particular, let $\Omega_1,\Omega_2\subset\Q_p$ be compact open sets which are $p$-homogeneous and $\Omega=\Omega_1\times\Omega_2\subset\Q_p^2$.
If we can prove that``$\Omega$ is spectral $\Rightarrow$ $\Omega$ is a tile" is true in $\Q_p^2$, then
combining our results with the Theorem $1.1$ in \cite{Fan-Fan-Shi}, we can conclude that the conjecture holds
for this kind of $\Omega\subset\Q_p^2$.

\textbf{Declarations}

\textbf{ Acknowledgements:} 
The author sincerely thanks the referee for the careful reading of the manuscript
and for many insightful comments and suggestions. In particular, the remarks concerning
Lemma \ref{lattice point}, Lemma \ref{lembasespectrum}, and the proof of Theorem \ref{weak convergence}
have substantially improved the quality of the paper.

\textbf{Conflict of interest:} The author has no relevant financial or non-financial interests to disclose.

\textbf{Data availability statement:} Data sharing is not applicable to this article as no data
sets were generated or analysed during the current study.

\textbf{Fundings:} This research is supported by NSF of China (Grant No. 12361015), and
by NSF of Xinjiang Uygur Autonomous Region, P. R. China (Grant No. 2025D01A09).


\end{document}